\numberwithin{equation}{section}
\theoremstyle{plain}
 \newtheorem{theorem}{Theorem}[section]
 \newtheorem{lemma}[theorem]{Lemma}
 \newtheorem{proposition}[theorem]{Proposition}
\theoremstyle{definition}
 \newtheorem{definition}[theorem]{Definition}
 \newtheorem{example}[theorem]{Example}
 \newtheorem{remark}[theorem]{Remark}
\newcommand{\bN}{\mathbb{N}}
\newcommand{\bZ}{\mathbb{Z}}
\newcommand{\bR}{\mathbb{R}}
\newcommand{\bC}{\mathbb{C}}
\newcommand{\supp}{\mbox{\rm supp }}
\newcommand{\bE}{\mathbb{E}}
\newcommand{\one}{\mathbf{1}}
\begin{document}

\vspace{5mm}
\begin{center}
{\bf
{\large
L\'{e}vy driven linear and semilinear stochastic partial differential equations}}

\vspace{5mm}

David Berger\\
\end{center}
\vspace{5mm}
The goal of this paper is twofold. In the first part we will study  L\'{e}vy white noise in different distributional spaces and solve equations of the type $p(D)s=q(D)\dot{L}$, where $p$ and $q$ are polynomials. Furthermore, we will study measurability of $s$ in Besov spaces. By using this result we will prove that stochastic partial differential equations of the form
\begin{align*}
p(D)u=g(\cdot,u)+\dot{L}
\end{align*}
have measurable solutions in weighted Besov spaces, where $p(D)$ is a partial differential operator in a certain class, $g:\bR^d\times \bC\to \bR$ satisfies some Lipschitz condition and $\dot{L}$ is a L\'{e}vy white noise.
\section{Introduction}
A stochastic process $X=(X_t)_{t\in\bR}$ is called a CARMA process, if $X$ is a solution of the (formal) stochastic differential equation 
\begin{align}\label{carmaone}
\sum\limits_{j=0}^m a_j \frac{d^{j}X(t)}{dt^{j}} =\sum\limits_{k=1}^n b_k \frac{d^{k}L(t)}{dt^{k}}
\end{align}
where $m,n\in\bN$, $a_j,b_k\in\bR$ for every $0\le j\le m$ and $0\le k\le n$ and $L$ is a L\'{e}vy process. Equation (\ref{carmaone}) can also be written as $a(D)X_t=b(D)L(t),$ where $a(z)=\sum\limits_{j=0}^m a_j z^j$ and $b(z)=\sum\limits_{j=1}^n b_j z^j$.  In \cite{Lindner} necessary and sufficient conditions on $L$ on the polynomials $a$ and $b$ were given such that there exists a strictly stationary solution of $(\ref{carmaone})$, namely it was shown that it is sufficient and necessary that $\bE \log^+(|L_1|)<\infty$. CARMA processes have many applications, see for example \cite{Garcia} and \cite{Brockwell3}.\\
For dimensions greater than 1, there exist more than one definition of a CARMA random field. Here, we will recall only the definition in the sense of Berger, see \cite{Berger}. For the other definitions see the two papers of Brockwell and Matsuda \cite{Brockwell} and Pham \cite{Pham}. In \cite{Berger} a CARMA random field $s$ is a stationary generalized stochastic process on the space of test functions $\mathcal{D}(\bR^d):=\mathcal{C}^{\infty}_c(\bR^d)$, which solves the equation
\begin{align}\label{carmad}
p(D)s=q(D)\dot{L},
\end{align}
where $p$ and $q$ are real polynomials in $d$-variables and $\dot{L}$ is L\'{e}vy white noise with characteristic triplet $(a,\gamma,\nu)$, where (\ref{carmad}) means that
\begin{align*}
\langle s,p(D)^*\varphi\rangle=\langle \dot{L},q(D)^*\varphi\rangle
\end{align*}
for every $\varphi\in\mathcal{D}(\bR^d)$, where $p(D)^*$ denotes the (formal) adjoint operator of $p(D)$. For the definition of stationary generalized processes and L\'{e}vy white noise see Section \ref{linearspde} or [\ref{Berger}, Definition 3.2]. It was shown that if the rational function $\frac{p}{q}$ has an holomorphic extension in a certain set and ${\int_{|r|>1}\log(|r|)^d\nu(dr)<}\nolinebreak\infty$, then there exists a stationary solution of (\ref{carmad}). The problem of the stationary generalized solution $s$ is that it may not have a random field representation and the question of uniqueness is open. Furthermore, as the regularity of $s$ is not well-understood in \cite{Berger}, it is not directly clear if one can solve more complex SPDEs than (\ref{carmad}). The goal of this paper is to tackle these problems and give some answers to these questions. We will show the existence of the L\'{e}vy white noise in the space of tempered ultradistributions and Fourier hyperfunctions defined as in \cite{Triebel1} and \cite{Kaneko1} and show that (\ref{carmad}) has solutions in the space of tempered (ultra-)distributions, Fourier hyperfunctions and Besov spaces under specific assumptions. Furthermore, we will analyze the semilinear equation
\begin{align}\label{semid}
p(D)s=g(\cdot,s)+\dot{L}
\end{align}
in certain weighted Besov spaces, where  $g:\bR^d\times \bC\to \bR$  is a sufficiently regular function. 
The above mentioned results can be found in Sections \ref{linearspde} and \ref{semilinearspdes}, where our main results are Theorem \ref{temperedultradistributions}, Theorem \ref{sfh} and Proposition \ref{lemma1}. In detail, in Section \ref{linearspde} we recall the definition of generalized stochastic processes and study (\ref{carmad}) in the three different spaces. In Section \ref{semilinearspdes} we study (\ref{semid}) in different Besov spaces.
\section{Notation and Preliminaries}\label{section0}
To fix notation, by $(\Omega,\mathcal{F})$ we denote a measurable space, where $\Omega$ is a set and $\mathcal{F}$ is a $\sigma$-algebra and by $L^0(\Omega,\mathcal{F},\mathbb{K})$ we denote all measurable functions $f:\Omega\to\mathbb{K}$ with respect to $\mathcal{F}$ where $\mathbb{K}=\bR, \bC$. In the case that $\mathcal{F}$ and $\mathbb{K}$ are clear from the context we set $L^0(\Omega)=L^0(\Omega,\mathcal{F},\mathbb{K})$. If we consider a probability space $(\Omega,\mathcal{F}, \mathcal{P})$, where $\mathcal{P}$ is a probability measure on $(\Omega,\mathcal{F})$, we say that a sequence $(f_n)_{n\in\bN}\subset L^0(\Omega)$ converges to $f$ in $L^0(\Omega)$ if $f_n$ converges in probability to $f$ with respect to the measure $\mathcal{P}$. In the case of $(\bR^d,\mathcal{B}(\bR^d))$ we denote by $\mathcal{B}(\bR^d)$ the Borel-$\sigma$-set on $\bR^d$. \\
We write $\bN=\{1,2,\dotso\}$, $\bN_0=\bN\cup \{0\}$ and $\bZ,\,\bR,\,\bC$ for the set of integers, real numbers and complex numbers, respectively. If $z\in \bC$, we denote by $\Im z$ and $\Re z$ the imaginary and the real part of $z$. The Euclidean norm is denoted by $\|\cdot\|$ and $r^+:=\max\{0,r\}$ for every $r\in\bR$ . By $C^\infty(\bR^d,\bC)$ we denote the set of all functions $\varphi :\bR^d\to \bC$ which are infinitely often differentiable. Furthermore, by $L^p(\bR^d, A)$ for $A\subseteq \bC$ and $0<p\le \infty$ we denote the set of all Borel-measurable functions $f:\bR^d \to A$ such that $\int_{\bR^d} |f(x)|^p\,\lambda^d(dx)<\infty$ for $0<p<\infty$ and $\mathop{\textrm{ess sup}}_{x\in\bR^d}|f(x)|<\infty$ for $p=\infty$, where $\lambda^d$ is the $d-$dimensional Lebesgue measure. We denote by $||f||_{L^p}=\left(\int_\bR |f(x)|^p\,\lambda(dx)\right)^{1/p}$ for $0<p<\infty$ and $\|f\|_{L^\infty}=\mathop{\textrm{ess sup}}_{\bR^d}|f|$ the $L^p$-(quasi-)norm for a measurable function $f$. We write $\langle x\rangle:=\left(1+\|x\|^2\right)^{1/2}$ and $\|f\|_{L^p(\bR^d,\rho)}:=\| \langle \cdot\rangle^{\rho}f\|_{L^p(\bR^d)}$ for $\rho\in \bR$.  Let $(a_k)_{k\in \bN_0} \subset \bC$ be a sequence and we set $$\|(a_k)_{k\in \bN_0}\|_{l^q}:=\left(\sum\limits_{k\in \bN_0} |a_k|^q\right)^{\frac{1}{q}}$$
for $0<q< \infty$. For $q=\infty$ the norm is given by $\|(a_k)_{k\in \bN_0}\|=\sup_{k \in \bN_0}|a_k|$. By $d_f$ we denote the distribution function of $f:\bR^d\to \bC$, which means that 
\begin{align}
d_f(\alpha):=\lambda^d(\{x\in \bR^d: |f(x)|>\alpha\}),\, \alpha\ge 0.
\end{align}
The space $\mathcal{D}(\bR^d)$ denotes the set of all infinitely differentiable functions $f:\bR^d\to\bR$ with compact support with its usual topology (e.g. [\ref{Fageot}, Section 2.1]), where we denote the support of $f$ by $\supp f$. The topological dual space of $\mathcal{D}(\bR^d)$ will be denoted by $\mathcal{D}'(\bR^d)$, where an element  $u\in\mathcal{D}'(\bR^d)$ is called a distribution. The space $\mathcal{S}(\bR^d)$ denotes the Schwartz space equipped with its usual topology, see [\ref{Dalang1}, Section 1, p. 4391] and $\mathcal{S}'(\bR^d)$ its topological dual with its strong topology. We sometimes write $\mathcal{S}$ and $\mathcal{S}'$, if the dimension is clear. We will write $\langle u,\varphi\rangle:=u(\varphi)$ for $\varphi\in\mathcal{D}(\bR^d)$ (or $\mathcal{S}(\bR^d)$) and $u\in \mathcal{D}'(\bR^d)$ (or $\mathcal{S}'(\bR^d)$). We say that a function $a:Y\to\bR$ from some function space $Y$ acts as a Fourier multiplier for some function space $X$ to a function space $R$ with well-defined Fourier transform $\mathcal{F}$ if $a:X\to R$ is defined by $a(u):=\mathcal{F}^{-1}(a\mathcal{F}u)$, where $(a\mathcal{F}(u))(t)=a(t)\mathcal{F}(u)(t)$ such that the inverse Fourier transform $\mathcal{F}^{-1}$ is well-defined. For a function $f\in L^1(\bR^d,\bC^d)$ we set $\mathcal{F} f(x)=\int\limits_{\bR^d} e^{-i \langle z, x\rangle}f(z)\lambda^d(dz)$ and the $L^2$-Fourier transform likewise. A polynomial $p$ is a function given by $p(z)=\sum\limits_{|\alpha|\le m}p_{\alpha}z^{\alpha}$, $\alpha\in \bN^d_0$, $m\in\bN$, $z^{\alpha}=z_1^{\alpha_1}\dotso z_d^{\alpha_d}$ and $|\alpha|:=\alpha_1+\dotso+\alpha_d$. 
We set $D^{\alpha}=\partial_{x_1}^{\alpha_1}\dotso\partial_{x_d}^{\alpha_d}$ for $\alpha \in \bN^d_0$. We denote by $A^*$ the adjoint of the operator $A$.\\ We introduce weighted Besov spaces and follow \cite{Triebel3}.
Let $\varphi_0\in \mathcal{S}(\bR^d)$ such that $\varphi_0(x)=1$ if $\|x\|\le  1$ and $\varphi_0(x)=0$ if $\|x\|\ge 3/2,$ and we set
\begin{align*}
\varphi_k(x)=\varphi_0(2^{-k}x)-\varphi_0(2^{-k+1}x), \,x\in\bR^d,\, k\in\bN.
\end{align*}
As $\sum\limits_{k=0}^\infty \varphi_k(x)=1$ for all $x\in\bR^d$, it  is clear that $(\varphi_k)_{k\in \bN_0}$ is a dyadic decomposition of unity in $\bR^d$. We set
\begin{align*}
\Delta_kf:=\mathcal{F}^{-1} \varphi_k \mathcal{F}f
\end{align*}
for every $f\in \mathcal{S}'(\bR^d)$. Observe that this object is a well-defined function, which can be evaluated pointwise, see [\ref{Triebel1}, Remark 1, p. 37].
A weighted Besov space ${B}^l_{r,t}(\bR^d,\rho)$ is a subspace of $\mathcal{S}'(\bR^d)$ which is characterized by four parameters $l,\rho\in\bR$ and $r,t>0$, where $f\in {B}^l_{r,t}(\bR^d,\rho)$ if and only if
\begin{align*}
\|f\|_{B^l_{r,t}(\bR^d,\rho)}:=\| (2^{lk}\|\Delta_k f\|_{L^r(\bR^d,\rho)})_{k\in \bN_0}\|_{l^t}<\infty.
\end{align*}
For $r=t=2$ and $l>0$ we identify the weighted Sobolev space $W^l_2(\bR^d,\rho)$ with $B_{2,2}^l(\bR^d,\rho)$, i.e. there exists a continuous and bijective mapping $\xi$ from $B_{2,2}^l(\bR^d,\rho)$ to $W^l_2(\bR^d,\rho)$ such that for all $f\in B_{2,2}^l(\bR^d,\rho)$ 
\begin{align}\label{mapping}
f(\varphi)=\int_{\bR^d} \xi(f)(x)\varphi(x) \lambda^d(dx)\textrm{ for all }\varphi \in \mathcal{S},
\end{align}
see [\ref{Triebel1}, Theorem 2.5.6, p. 88]. Moreover, $\xi$ is also continuous from $B_{r,r}^l(\bR^d,\rho)$ to $L^r(\bR^d,\rho)$ for $l>0$, $r\ge 2$. From now on we write for $\xi(f)$ simply $f$.
\\An interesting property of the Besov spaces are their embeddings, which are described as follows:
\begin{proposition}[see {[\ref{Fageot1}, Proposition 3, p. 1605]}]\label{embeddings}
Let $p_0,p_1\in (0,\infty]$ and $\tau_0, \tau_1, \rho_0,\rho_1\in \bR$ with $\tau_0\ge \tau_1$. It holds that $B_{p_0,p_0}^{\tau_0}(\bR^d, \rho_0)$ is continuously embedded in  $B_{p_1,p_1}^{\tau_1}(\bR^d,\rho_1)$ if $\tau_0-\tau_1\ge\frac{d}{p_0}-\frac{d}{p_1}$, $p_1\ge p_0$ and $\rho_0\ge\rho_1$. If the inequalities are strict, the embeddings are compact.
\end{proposition}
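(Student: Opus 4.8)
Since the statement is quoted from [\ref{Fageot1}, Proposition 3], I only indicate the route I would follow. The plan is to first remove the weight mismatch and then prove a weighted version of the classical Besov embedding theorem. For the weights: because $\langle x\rangle\ges 1$ for every $x\in\bR^d$ and $\rho_0\ges\rho_1$, one has $\langle x\rangle^{\rho_1}\les\langle x\rangle^{\rho_0}$ pointwise, hence $\|g\|_{L^{r}(\bR^d,\rho_1)}\les\|g\|_{L^{r}(\bR^d,\rho_0)}$ for every measurable $g$ and every $r>0$. Applying this with $g=\Delta_k f$ reduces the continuous embedding to the case $\rho_0=\rho_1=:\rho$, so it suffices to show that $B^{\tau_0}_{p_0,p_0}(\bR^d,\rho)$ is continuously embedded in $B^{\tau_1}_{p_1,p_1}(\bR^d,\rho)$.

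Next I would establish a weighted Nikolskii (Bernstein) inequality: there is a constant $C$ such that
\begin{align*}
\|\Delta_k f\|_{L^{p_1}(\bR^d,\rho)}\les C\,2^{kd(1/p_0-1/p_1)}\,\|\Delta_k f\|_{L^{p_0}(\bR^d,\rho)},\qquad k\in\bN_0,
\end{align*}
for all $f\in\cS'(\bR^d)$. For this one writes $\Delta_k f=\Delta_k f*\eta_k$, where $\eta_k=\cF^{-1}\wt\varphi_k$ with $\wt\varphi_k\in\cS(\bR^d)$ equal to $1$ on $\supp\varphi_k$ and supported in an annulus of radius $\sim 2^k$ (a ball for $k=0$), so that $\eta_k$ is a dilate of one fixed Schwartz function; combining the elementary bound $\langle x\rangle^{\rho}\les C_\rho\langle x-y\rangle^{|\rho|}\langle y\rangle^{\rho}$ with Young's inequality (for $p_0\ges 1$) or, in the quasi-Banach range $p_0<1$, with Peetre's maximal-function estimate, together with the scaling of $\eta_k$, produces the factor $2^{kd(1/p_0-1/p_1)}$. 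Multiplying through by $2^{\tau_1 k}$ and using the hypothesis $\tau_1+d(1/p_0-1/p_1)\les\tau_0$ gives $2^{\tau_1 k}\|\Delta_k f\|_{L^{p_1}(\rho)}\les C\,2^{\tau_0 k}\|\Delta_k f\|_{L^{p_0}(\rho)}$; taking the $\ell^{p_1}$-norm on the left, the $\ell^{p_0}$-norm on the right, and using $\|\cdot\|_{\ell^{p_1}}\les\|\cdot\|_{\ell^{p_0}}$ (valid since $p_1\ges p_0$) yields the claimed continuous embedding.

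For the compactness assertion under strict inequalities I would show that the image of the unit ball of $B^{\tau_0}_{p_0,p_0}(\bR^d,\rho_0)$ is totally bounded in $B^{\tau_1}_{p_1,p_1}(\bR^d,\rho_1)$. The strict gain $\tau_0-\tau_1>d(1/p_0-1/p_1)$ yields an extra decay $2^{-k\ep}$ in the chain above, so the frequency tail $\sum_{k>N}$ is uniformly negligible; the strict gain $\rho_0>\rho_1$ yields an extra decay $\langle x\rangle^{\rho_1-\rho_0}$, so the spatial tail over $\{\|x\|>R\}$ is uniformly negligible; on what remains — finitely many band-limited Littlewood--Paley blocks restricted to a ball — one applies the Rellich--Kondrachov / Arzel\`a--Ascoli compactness for band-limited functions of bounded norm, and an $\ep$-net argument assembles these into total boundedness. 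I expect the main obstacle to be precisely the weighted Nikolskii inequality, in particular controlling the weight under convolution and, for $p<1$, replacing Young's inequality by the Peetre maximal operator; once that estimate is available, the rest is a routine interplay of the dyadic decomposition with the monotonicities $\|\cdot\|_{\ell^{p_1}}\les\|\cdot\|_{\ell^{p_0}}$ and $\|\cdot\|_{L^r(\bR^d,\rho_1)}\les\|\cdot\|_{L^r(\bR^d,\rho_0)}$.
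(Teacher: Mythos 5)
The paper does not prove this proposition; it is quoted verbatim from [\ref{Fageot1}, Proposition 3], so there is no in-paper argument to compare against. Your sketch --- reduction to equal weights via the pointwise monotonicity of $\langle x\rangle^{\rho}$, a weighted Nikolskii inequality on each Littlewood--Paley block (Young's inequality for $p_0\ge 1$, the Peetre/Plancherel--Polya--Nikolskii maximal estimate for $p_0<1$), the embedding $\ell^{p_0}\hookrightarrow\ell^{p_1}$, and frequency/spatial tail estimates plus Arzel\`a--Ascoli on finitely many band-limited blocks for compactness --- is the standard route and is essentially the argument of the cited reference and the sources it builds on, so it is correct and takes the same approach.
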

\section{Linear stochastic partial differential equations in the spaces of tempered distributions, tempered ultradistributions, Fourier hyperfunctions and Besov spaces with polynomial weights}\label{linearspde}
At first we give a short introduction to generalized processes on more general distributional space $A'$, which is the dual space of a suitable function space $A$. For example, $A$ can be the Schwartz space $\mathcal{S}(\bR^d)$, the space of test functions $\mathcal{D}(\bR^d)$ or even weighted Besov spaces $B_{p,q}^{s}(\bR^d, \rho)$ for suitable $s,\,p$ and $\rho$.
\begin{definition}[see {\cite{Fageot}, Definition 2.1}]
An \emph{$A'$-valued generalized random process} $s$ is a measurable mapping from $(\Omega,\mathcal{F})$ to a distributional space $(A', C(A'))$, where $C(A')$ denotes the $\sigma$-field generated by the cylindrical sets
\begin{align*}
\{u\in A': \langle u,\varphi_j\rangle \in B\textrm{ for every }j=1,\dotso,n\}
\end{align*}
for every $\varphi_1,\dotso,\varphi_n\in A$ and $B\in\mathcal{B}(\bR)$.
\end{definition}
Since in our cases under consideration $A'$ will be nuclear or even a Hilbert space, it follows from [\ref{Ito}, p.6] that $C(A')$ is equal to the $\sigma-$algebra $B^*(A')$ generated by the weak-$\ast$-topology in $A'$.\\
The probability law of a generalized random process $s$ is given by
\begin{align*}
\mathcal{P}_s(B):=\mathcal{P}(s\in B)
\end{align*}
for $B\in\mathcal{B}^*(A')$. The characteristic functional $\widehat{\mathcal{P}}_s$ is then defined by
\begin{align*}
\widehat{\mathcal{P}}_s(\varphi)=\int\limits_{A'}\exp(i\langle u,\varphi\rangle)d\mathcal{P}_s(u), \, \varphi\in A.
\end{align*}
We will work with L\'{e}vy white noise, which is a generalized random process, where the characteristic functional satisfies a L\'{e}vy-Khintchine representation. 
\begin{definition}
A \emph{L\'{e}vy white noise} $\dot{L}$ on $A'$ is an $A'$-valued generalized random process, where the characteristic functional is given by
\begin{align*}
\widehat{\mathcal{P}}_{\dot L}(\varphi)=\exp\left(\,\,\int\limits_{\bR^d} \psi(\varphi(x))\lambda^d(dx)\right)
\end{align*}
for every $\varphi \in A$, where $\psi:\bR\to \bC$ is given by
\begin{align*}
\psi(z)=i\gamma z-\frac{1}{2}az^2+\int\limits_{\bR}(e^{ixz}-1-ixz\one_{|x|\le1})\nu(dx)
\end{align*}
where $a\in\bR^+$, $\gamma\in\bR$ and $\nu$ is a \emph{L\'{e}vy-measure}, i.e. a measure such that $\nu(\{0\})=0$ and $$\int\limits_{\bR} \min(1,x^2)\nu(dx)<\infty.$$ We say that $\dot{L}$ has the \emph{characteristic triplet} $(a,\gamma,\nu)$.
\end{definition}
The L\'{e}vy white noise is stationary in the following sense:
\begin{definition}
A generalized random process $s$ is called \emph{stationary} if for every $t\in\bR^d$, $s(\cdot+t)$ has the same law as $s$. Here, $s(\cdot+t)$ is defined by
$$\langle s(\cdot+t),\varphi\rangle:=\langle s, \varphi(\cdot-t)\rangle \textrm{ for every }\varphi\in A.$$
\end{definition}
It is well-known that a L\'{e}vy white noise $\dot{L}$ on the space of tempered distributions $\mathcal{S}'$ with characteristic triplet $(a,\gamma,\nu)$ exists if and only if there exists an $\varepsilon>0$ such that $\int\limits_{|r|>1}|r|^{\varepsilon}\nu(dr)<\infty$, see [\ref{Dalang1}, Theorem 3.13, p. 4412]. As the space of tempered distributions is too small for many cases of the L\'{e}vy white noise, we will construct the L\'{e}vy white noise in another distributional space. We discuss the existence of the L\'{e}vy white noise in the space of tempered ultradistribution. The space of tempered ultradistributions is very similar to the space of tempered distributions, especially the space $\mathcal{S}'_{\omega}$ is nuclear, which allows us to use the Bochner-Minlos Theorem. Moreover, by similar arguments we construct L\'{e}vy white noise in the space of Fourier hyperfunctions.
Furthermore, we will discuss in the spirit of [\ref{Berger}, Theorem 4.3] the solvability of the equations 
\begin{align}\label{carma}
p(D)s=q(D)\dot{L}
\end{align}
 in the space of tempered distributions, tempered ultradistributions and Fourier hyperfunctions, where $p(z)=\sum_{|\alpha|\le n} p_{\alpha}z^{\alpha}$ and  $q(z)=\sum_{|\alpha|\le m} q_{\alpha}z^{\alpha}$ are real multivariate polynomials. Moreover, we will study (\ref{carma}) also for L\'{e}vy white noise in Besov spaces, as these results are needed in Section \ref{semilinearspdes}  for more complex (nonlinear) stochastic partial differential equations. We start with an existence result on the space of tempered distributions of (\ref{carma}). Observe that $p(D)^*=p(-D)$ and $q(D)^*=q(-D)$.
\begin{proposition}\label{prop1}
Let $\dot{L}$ be a L\'{e}vy white noise on the space of tempered distributions $\mathcal{S}'$. Let $p$ and $q$ be two polynomials such that there exists two polynomials $h$ and $l$ such that  $\frac{q(i\cdot)}{p(i\cdot)}=\frac{h(i\cdot)}{l(i\cdot)}$ on $\bR^d$ and $l$ has no zeroes on $i\bR^d$. Then there exists a generalized process $s$ on the space of tempered distributions solving (\ref{carma}), i.e. it holds that
\begin{align}\label{s}
\langle s,p(D)^*\varphi\rangle=\langle\dot{L},q(D)^*\varphi\rangle
\end{align}
for all $\varphi\in \mathcal{S}(\bR^d)$, which is stationary. If $p(iz)\neq 0$ for all $z\in\bR^d$, then the solution $s$ is unique.
\end{proposition}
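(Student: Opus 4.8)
The plan is to write $s$ explicitly as a Fourier multiplier transform of $\dot L$ and then read off all four assertions by symbol calculus. Set $m(\xi):=\frac{h(i\xi)}{l(i\xi)}$. Since $l$ has no zeroes on $i\bR^d$, the polynomial $\xi\mapsto l(i\xi)$ has no real zeroes, and by the classical Seidenberg--Tarski/{\L}ojasiewicz estimate for zero-free polynomials there are $c>0$ and $N\in\bN$ with $|l(i\xi)|\ge c\langle\xi\rangle^{-N}$ on $\bR^d$; hence $1/l(i\cdot)$, and therefore $m$, is a smooth function all of whose derivatives grow at most polynomially. Thus $m$ acts as a Fourier multiplier on $\mathcal S$ and on $\mathcal S'$, i.e. $Mu:=\mathcal F^{-1}(m\,\mathcal Fu)$ is well defined and continuous on $\mathcal S'$; the same is true for $p(D)$ and $q(D)$, whose symbols $p(i\cdot)$ and $q(i\cdot)$ are polynomials. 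I would then define $s:=M\dot L$.

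First, $s$ is a genuine $\mathcal S'$-valued generalized process: the adjoint $M^\ast$ is the Fourier multiplier with symbol $m(-\,\cdot)$, which is again of the admissible class, so $M^\ast\colon\mathcal S\to\mathcal S$ is continuous, $M$ is continuous for the weak-$\ast$ topologies, hence $\mathcal B^\ast(\mathcal S')$-measurable, and therefore $s=M\circ\dot L$ is measurable. Next, the identity $\frac{q(i\cdot)}{p(i\cdot)}=\frac{h(i\cdot)}{l(i\cdot)}$ on $\bR^d$ forces the polynomial identity $q\,l=p\,h$, so $p(i\xi)\,m(\xi)=\frac{p(i\xi)h(i\xi)}{l(i\xi)}=q(i\xi)$ for every $\xi\in\bR^d$; consequently, for any $u\in\mathcal S'$, $p(D)Mu=\mathcal F^{-1}\bigl(p(i\cdot)\,m\,\mathcal Fu\bigr)=\mathcal F^{-1}(q(i\cdot)\mathcal Fu)=q(D)u$. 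Hence almost surely $p(D)s=q(D)\dot L$ in $\mathcal S'$, which after pairing with $\varphi\in\mathcal S$ is exactly (\ref{s}). Stationarity is immediate: $M$ commutes with translations (every Fourier multiplier does), so $s(\cdot+t)=M\bigl(\dot L(\cdot+t)\bigr)$, and since $\dot L(\cdot+t)\eqd\dot L$ and $M$ is a fixed measurable map, $s(\cdot+t)\eqd s$.

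For uniqueness under the extra hypothesis $p(iz)\ne 0$ for all $z\in\bR^d$: the same {\L}ojasiewicz bound gives $1/p(i\cdot)$ smooth with polynomially bounded derivatives, so $p(D)$ is a bijection of $\mathcal S'$ with inverse the Fourier multiplier of symbol $1/p(i\cdot)$; equivalently $p(D)^\ast\mathcal S=p(-D)\mathcal S=\mathcal S$ (as $p(-i\cdot)$ is also zero-free). If $s_1,s_2$ both satisfy (\ref{s}) for the given $\dot L$, then $\langle s_1-s_2,p(D)^\ast\varphi\rangle=0$ for each $\varphi\in\mathcal S$; taking $\varphi$ in a countable dense subset of $\mathcal S$ and using continuity, this holds simultaneously almost surely, and since $p(D)^\ast\mathcal S=\mathcal S$ we get $\langle s_1-s_2,\psi\rangle=0$ a.s. for all $\psi\in\mathcal S$, i.e. $s_1=s_2$.

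I expect the only non-formal point to be the polynomial lower bound $|l(i\xi)|\ge c\langle\xi\rangle^{-N}$ (and its analogue for $p(i\cdot)$ in the uniqueness part): this is what upgrades the formal symbols $h/l$ and $1/p$ to bona fide Fourier multipliers on $\mathcal S$ and $\mathcal S'$. Everything else is bookkeeping with Fourier multipliers and the identification $C(\mathcal S')=\mathcal B^\ast(\mathcal S')$ already recalled in the preliminaries.
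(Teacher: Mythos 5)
Your proposal is correct and follows essentially the same route as the paper: you define $s$ by applying the Fourier multiplier with symbol $q(i\cdot)/p(i\cdot)=h(i\cdot)/l(i\cdot)$ to $\dot L$, justified by the lower bound $|l(i\xi)|\ge c\langle\xi\rangle^{-N}$ for real-zero-free polynomials, which is precisely the content of H\"ormander's Lemma~2 that the paper invokes for the continuity of $\varphi\mapsto\mathcal F^{-1}\bigl(\tfrac{q(i\cdot)}{p(i\cdot)}\mathcal F\varphi\bigr)$ on $\mathcal S$. Your uniqueness argument via surjectivity of $p(D)^{*}$ on $\mathcal S$ is simply a direct proof of the Ortner--Wagner proposition the paper cites (that $p(D)T=0$ in $\mathcal S'$ forces $T=0$ when $p(i\cdot)$ has no real zeros), so there is no genuine difference in approach and no gap.
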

\begin{proof}
We observe by [\ref{Hormander}, Lemma 2] that $\varphi\mapsto\mathcal{F}^{-1}\left( \frac{q(i\cdot)}{p(i\cdot)}\mathcal{F}\varphi\right) $ defines a continuous operator from $\mathcal{S}(\bR^d)$ to $\mathcal{S}(\bR^d)$ and define 
\begin{align}\label{construction}
\langle s,\varphi\rangle:=\langle \dot{L}, \mathcal{F}^{-1}\left( \frac{q(i\cdot)}{p(i\cdot)}\mathcal{F}\varphi\right)\rangle.
\end{align}
We conclude that $s$ defines a generalized process on the space of tempered distributions. That it solves (\ref{s}) follows easily by $\mathcal{F}p(-D)\varphi=p(-i\cdot)\mathcal{F}\varphi$ for every $\varphi \in\mathcal{S}$ and the stationarity of $s$ follows from that of $\dot{L}$.\\
Now let $u$ be another solution of the equation $(\ref{carma})$. One observes that \begin{align}\label{null}\langle p(D)(s-u),\varphi\rangle=0\end{align} for every $\varphi\in\mathcal{S}$. In the case that $p(iz)\neq 0$ for all $z\in\bR^d$ it is known that only the null-solution satisfies equation (\ref{null}), see [\ref{Ortner}, Proposition 2.4.1, p. 152], so we conclude $s=u$.
\end{proof}
Our second distribution space is the space of tempered ultradistributions. For a detailed introduction to these spaces see \cite{Triebel1}. We recall the definition.
\begin{definition}\label{ultra}
Let $\omega:\bR^d\to \bR$ be a real-valued function such that $\omega(x)=\sigma(\|x\|)$, where $\sigma(t)$ is an increasing continuous concave function on $[0,\infty)$ with
\begin{align*}
&\sigma(0)=0,\\
&\int\limits_{0}^\infty \frac{\sigma(t)}{1+t^2}\lambda^1(dt)<\infty,\\
&\sigma(t)\ge c+m\log(1+t)\textrm{ if }t\ge 0
\end{align*}
for some $c\in\bR$ and $m>0$. Then the space $\mathcal{S}_\omega$ is the set of all infinitely differentiable functions $\varphi: \bR^d\to\bC$ such that
\begin{align*}
&p_{\alpha,\eta}(\varphi):=\sup_{x\in\bR^d} e^{\eta \omega(x)} \|D^\alpha \varphi(x)\|<\infty,\\
&\pi_{\alpha,\eta}(\varphi):=\sup_{x\in\bR^d} e^{\eta \omega(x)} \|D^\alpha (\mathcal{F}\varphi)(x)\|<\infty,
\end{align*}
for every multi-index $\alpha$ and every $\eta>0$. The space is equipped with its seminorms given above and its topological dual $\mathcal{S}'_{\omega}$ is called the \emph{space of tempered ultradistributions}.
\end{definition}
We denote by $\omega^{\rightarrow}(\alpha):=\sup\{ x\in[0,\infty):\omega(xe_1)<\alpha\}$ for $\alpha\in (0,\infty)$, where $e_1$ is the unit vector $(1,0\dotso,0)$.  \\
We split a function $\varphi \in C^\infty(\bR^d,\bC)$ in its real and imaginary part and prove the existence of a L\'{e}vy white noise on 
$$\mathcal{S}_{\omega}^{real}=\mathcal{S}_\omega\cap \{\varphi:\bR^d\to \bC:\varphi(x)\in \bR\textrm{ for all }x\in\bR^d\}.$$
Observe that $\mathcal{S}_{\omega}^{real}$ equipped with the topology of $\mathcal{S}_\omega$ is closed and therefore nuclear.
We then set $\langle \dot{L},\varphi\rangle:=\langle \dot{L},\Re \varphi\rangle+i\langle \dot{L},\Im \varphi\rangle$ which defines the L\'{e}vy white noise on $\mathcal{S}_\omega$. 
\begin{theorem}\label{levyultra}
Let $(a,\gamma,\nu)$ be a characteristic triplet and $\omega$ be a function defined as in Definition \ref{ultra}. If
\begin{align*}
\int\limits_{|r|>1} |r|\int\limits_{0}^{1/|r|}\omega^{\rightarrow}(c\log(|\alpha|^{-1}))^d\lambda^1(d\alpha) \nu(dr)<\infty
\end{align*}
for some $c \in (0,\infty)$, then there exists a L\'{e}vy white noise $\dot{L}:(\Omega,\mathcal{F})\to(\mathcal{S}_{\omega}',C(\mathcal{S}_{\omega}'))$ with characteristic triplet $(a,\gamma,\nu)$.
\end{theorem}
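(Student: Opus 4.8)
The plan is to deduce the statement from the Bochner--Minlos theorem applied to the nuclear space $\mathcal{S}_{\omega}^{real}$, which is nuclear since it is a closed subspace of the nuclear space $\mathcal{S}_\omega$. Concretely, I would show that
\[
\Phi(\varphi):=\exp\left(\int_{\bR^d}\psi(\varphi(x))\,\lambda^d(dx)\right),\qquad \varphi\in\mathcal{S}_{\omega}^{real},
\]
is a well-defined functional with $\Phi(0)=1$, is positive-definite, and is continuous; Bochner--Minlos then provides a probability measure on $((\mathcal{S}_{\omega}^{real})',C((\mathcal{S}_{\omega}^{real})'))$ with characteristic functional $\Phi$, and, as indicated above, setting $\langle\dot L,\varphi\rangle:=\langle\dot L,\Re\varphi\rangle+i\langle\dot L,\Im\varphi\rangle$ for $\varphi\in\mathcal{S}_\omega$ then yields the L\'evy white noise $\dot L\colon(\Omega,\mathcal{F})\to(\mathcal{S}_\omega',C(\mathcal{S}_\omega'))$ with triplet $(a,\gamma,\nu)$ (the measurability of this complex extension being routine). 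Here $\Phi(0)=1$ is immediate, and positive-definiteness follows, exactly as in the construction on $\mathcal{S}'$ in [\ref{Dalang1}, Theorem 3.13], from the fact that $\psi$ is a continuous negative-definite function, so that $\varphi\mapsto\int_{\bR^d}\psi(\varphi(x))\,\lambda^d(dx)$ is negative-definite and $\Phi$ positive-definite. Since a positive-definite functional with $\Phi(0)=1$ that is continuous at the origin is automatically continuous everywhere (by $|\Phi(\varphi_1)-\Phi(\varphi_2)|^2\le 2(1-\Re\Phi(\varphi_1-\varphi_2))$), the whole problem reduces to showing that
\[
N(\varphi):=\int_{\bR^d}\psi(\varphi(x))\,\lambda^d(dx)
\]
is finite for every $\varphi\in\mathcal{S}_{\omega}^{real}$ and tends to $0$ as $\varphi\to0$.

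To this end I would split $\psi=\psi_1+\psi_2+\psi_3$ with $\psi_1(z)=i\gamma z-\tfrac12 az^2$, $\psi_2(z)=\int_{|r|\le1}(e^{irz}-1-irz)\,\nu(dr)$ and $\psi_3(z)=\int_{|r|>1}(e^{irz}-1)\,\nu(dr)$. From $|\psi_1(z)|\le|\gamma||z|+\tfrac12 az^2$ and $|\psi_2(z)|\le\tfrac12 z^2\int_{|r|\le1}r^2\,\nu(dr)$, the contributions of $\psi_1$ and $\psi_2$ to $N(\varphi)$ are bounded by a constant times $\|\varphi\|_{L^1}+\|\varphi\|_{L^2}^2$; and because $|\varphi(x)|\le p_{0,\eta}(\varphi)\,e^{-\eta\omega(x)}$ for every $\eta>0$ (Definition \ref{ultra}), while $\omega(x)\ge c+m\log(1+\|x\|)$ makes $e^{-\eta\omega}$ integrable once $\eta m>d$, both $L^1$- and $L^2$-norms are dominated by a continuous seminorm of $\varphi$ and vanish as $\varphi\to0$. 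For the remaining term I set $\phi(s):=\int_{|r|>1}\min(2,|r|s)\,\nu(dr)$, so that $|\psi_3(z)|\le\phi(|z|)$ and, since $\phi$ is nondecreasing and $|\varphi(x)|\le p_{0,1/c}(\varphi)\,e^{-\omega(x)/c}=:Ke^{-\omega(x)/c}$,
\[
\left|\int_{\bR^d}\psi_3(\varphi(x))\,\lambda^d(dx)\right|\le\int_{\bR^d}\phi\big(Ke^{-\omega(x)/c}\big)\,\lambda^d(dx).
\]

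The crux is to estimate this last integral against the hypothesis. By Tonelli's theorem, the layer-cake formula, and the fact that (since $\omega(x)=\sigma(\|x\|)$ with $\sigma$ increasing) the set $\{x:\omega(x)<\alpha\}$ is a ball of radius $\omega^{\rightarrow}(\alpha)$, hence of Lebesgue measure $\kappa_d\,\omega^{\rightarrow}(\alpha)^d$ with $\kappa_d$ the volume of the unit ball in $\bR^d$, one obtains after the substitution $t=K|r|\beta$ that
\[
\int_{\bR^d}\phi\big(Ke^{-\omega(x)/c}\big)\,\lambda^d(dx)=\kappa_d K\int_{|r|>1}|r|\int_0^{\min(2/(K|r|),\,1)}\omega^{\rightarrow}\!\big(c\log(1/\beta)\big)^d\,\lambda^1(d\beta)\,\nu(dr).
\]
I would then split the inner integral at $\beta=1/|r|$. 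Over $(0,1/|r|)$ it is exactly of the form in the hypothesis, so this part is at most $\kappa_d K\int_{|r|>1}|r|\int_0^{1/|r|}\omega^{\rightarrow}(c\log(1/\beta))^d\,\lambda^1(d\beta)\,\nu(dr)<\infty$, and it carries the factor $K=p_{0,1/c}(\varphi)$, hence tends to $0$ as $\varphi\to0$. On the remaining interval, which is nonempty only when $K<2$, the monotonicity of $\omega^{\rightarrow}$ lets one bound the integrand by $\omega^{\rightarrow}(c\log|r|)^d$; for $|r|>2/K$ this gives at most $2\kappa_d\int_{|r|>2/K}\omega^{\rightarrow}(c\log|r|)^d\,\nu(dr)$, the tail of a convergent integral — indeed $\int_{|r|>1}\omega^{\rightarrow}(c\log|r|)^d\,\nu(dr)\le\int_{|r|>1}|r|\int_0^{1/|r|}\omega^{\rightarrow}(c\log(1/\beta))^d\,\lambda^1(d\beta)\,\nu(dr)<\infty$, because $\log(1/\beta)\ge\log|r|$ on $(0,1/|r|)$ — so it vanishes as $K\to0$; while for $1<|r|<2/K$ it gives at most $\kappa_d K\int_{1<|r|<2/K}|r|\,\nu(dr)$, which also tends to $0$ as $K\to0$ since $\nu(\{|x|>t\})\to0$ as $t\to\infty$ (so that $K\int_1^{2/K}\nu(\{|x|>t\})\,\lambda^1(dt)\to0$ by a standard averaging argument, and $K\,\nu(\{|r|>1\})\to0$). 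Putting these together, $N(\varphi)$ is finite for every $\varphi\in\mathcal{S}_{\omega}^{real}$ and $N(\varphi)\to0$ as $\varphi\to0$, so Bochner--Minlos applies and the theorem follows. I expect this last chain of estimates to be the main obstacle: the delicate point is to reconcile the $\varphi$-dependent upper limit $\min(2/(K|r|),1)$ with the clean cut-off $1/|r|$ of the hypothesis while still retaining the factor $K=p_{0,1/c}(\varphi)$ needed for continuity at the origin.
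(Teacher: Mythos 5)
Your overall route is the same as the paper's: apply Bochner--Minlos on the nuclear space $\mathcal{S}_\omega^{real}$, reduce everything to continuity at the origin of $\varphi\mapsto\int_{\bR^d}\psi(\varphi(x))\,\lambda^d(dx)$, control the drift/Gaussian and small-jump parts by $\|\varphi\|_{L^1}+\|\varphi\|_{L^2}^2$, and control the large-jump part by a layer-cake computation whose output is exactly the integral in the hypothesis. The paper does this last step by bounding the large-jump contribution by (a multiple of) $\int_{|r|>1}|r|\int_0^{1/|r|}d_{\varphi_n}(\alpha)\,\lambda^1(d\alpha)\,\nu(dr)$, computing $d_{e^{-\eta\omega}}(\alpha)=c_d\,\omega^{\rightarrow}(\log(\alpha^{-1})/\eta)^d$, and invoking dominated convergence, deferring the remaining bookkeeping to [\ref{Berger}, Theorem 3.4]; you work those details out explicitly, which is a useful supplement.

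One estimate is wrong as written, however. On the middle range $1<|r|<2/K$ the leftover piece of your inner integral is
\begin{align*}
\kappa_d K\int\limits_{1<|r|<2/K}|r|\int\limits_{1/|r|}^{1}\omega^{\rightarrow}\big(c\log(1/\beta)\big)^d\,\lambda^1(d\beta)\,\nu(dr)
\;\le\;\kappa_d K\int\limits_{1<|r|<2/K}|r|\,\omega^{\rightarrow}(c\log|r|)^d\,\nu(dr),
\end{align*}
and the factor $\omega^{\rightarrow}(c\log|r|)^d$, which is in general unbounded in $|r|$, cannot simply be dropped to arrive at your claimed bound $\kappa_d K\int_{1<|r|<2/K}|r|\,\nu(dr)$. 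The conclusion you want is nevertheless true: setting $G(r):=\omega^{\rightarrow}(c\log|r|)^d$, you have already shown $\int_{|r|>1}G\,d\nu<\infty$, and the same Ces\`aro averaging you apply to $K\int_1^{2/K}\nu(\{|x|>t\})\,\lambda^1(dt)$ applies verbatim to $H(t):=\int_{|r|>t}G\,d\nu$ and gives $K\int_{1<|r|<2/K}|r|G(r)\,\nu(dr)\to 0$ as $K\to 0$; so the slip is repairable with tools already in your argument. Note finally that the cut-off mismatch you flag as the main obstacle is self-inflicted: keeping the distribution function of $\varphi$ itself rather than of its envelope $Ke^{-\omega/c}$, the identity $\int_{\bR^d}\min(2,|r\varphi(x)|)\,\lambda^d(dx)=|r|\int_0^{2/|r|}d_\varphi(\alpha)\,\lambda^1(d\alpha)\le 2|r|\int_0^{1/|r|}d_\varphi(\alpha)\,\lambda^1(d\alpha)$, combined with the domination $d_{\varphi_n}(\alpha)\le c_d\,\omega^{\rightarrow}(c\log(\alpha^{-1}))^d$ once $\|\varphi_n\|_{L^\infty}\le 1$ and dominated convergence, yields the vanishing of the large-jump term with no case analysis at all, which is how the paper proceeds.
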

\begin{proof}
We need to show that the function
\begin{align*}
\mathcal{P}(\varphi):=\exp\left(\,\,\int\limits_{\bR^d}\left(i\gamma \varphi(u)-\frac{1}{2}a\varphi(u)^2+\int\limits_{\bR}(e^{ix\varphi(u)}-1-ix\varphi(u)\one_{|x|\le1})\nu(dx)\lambda^d(du)\right)\right)
\end{align*}
defines a continuous and positive-definite mapping on $\mathcal{S}_\omega^{real}$ and $\mathcal{P}(0)=1$. Then we conclude  by the Bochner-Minlos Theorem [\ref{Fageot2}, Theorem 1, p. 1186] that there exists a L\'{e}vy white noise in $(\mathcal{S}'_\omega, C(\mathcal{S}'_\omega))$ with characteristic triplet $(a,\gamma,\nu)$.\\
That $\mathcal{P}(0)=1$ is trivial, so we start with the continuity. Therefore, let $\rho(x):= \exp(-\eta \omega(x))$ with $\eta>0$. We see that
\begin{align}
\nonumber d_\rho(\alpha):&=\lambda^d(\{x\in\bR^d: \rho(x)>\alpha\})\\
\nonumber&=\lambda^d(\{x\in \bR^d: \omega(x)<\log(\alpha^{-1})/\eta\})\\
&=c_d  \omega^{\rightarrow}(\log(|\alpha|^{-1})/\eta)^d \label{equality1}
\end{align}
for some constant $c_d>0$. Now let $(\varphi_n)_{n\in\bN}$ be a sequence in $\mathcal{S}_\omega^{real}$ such that $\varphi_n\to 0$ in $\mathcal{S}_\omega$, which implies that $\sup_{x\in\bR^d} e^{\eta\omega(x)}|\varphi_n(x)|\to 0$ for $n\to\infty$ for all $\eta>0$ or equivalently $|\varphi_n(x)|\le c_n e^{-\eta\omega(x)}$ for all $x\in\bR^d$ for a sequence $(c_n)_{n\in\bN}\subset [0,\infty)$ converging to $0$ for $n\to\infty$. We conclude by (\ref{equality1}) and Lebesgue's dominated convergence theorem that
\begin{align*}
\int\limits_{|r|>1} |r|\int\limits_{0}^{1/|r|}d_{\varphi_n}(\alpha)\lambda^1(d\alpha)\nu(dr)\to 0,\,n\to\infty.
\end{align*} 
Now by similar arguments as in [\ref{Berger}, Theorem 3.4] we see that $\mathcal{P}$ is continuous on $\mathcal{S}_\omega^{real}$. That $\mathcal{P}$ is positive definite follows by a denseness argument similar to [\ref{Fageot2}, Proposition 2, p. 1187].
\end{proof}
We give two examples and obtain for a special weight $\omega$ the space $\mathcal{S}(\bR^d)$.
\begin{example}
Let $\omega(x)=m\log(1+\|x\|)$ for $m>0$. It is well-known that $\mathcal{S}_\omega=\mathcal{S}$, see [\ref{Triebel1}, Remark 4, p. 246], and we see that
$$\omega^{\rightarrow}(\alpha)=e^{\alpha/m}-1$$
for all $\alpha>0$ and we obtain for $c\in (0,m)$ and $\alpha\in (0,1)$
 $$\omega^{\rightarrow}(c\log(|\alpha|^{-1}))=\omega^{\rightarrow}(\log(|\alpha^{-c}|))\le  \alpha^{-c/m}.$$ As $c\in (0,1)$ is arbitrary, we conclude from Theorem \ref{levyultra} that if $\int_{|r|>1}|r|^{\varepsilon}\nu(dr)<\infty$ for some $\varepsilon>0$ there exists a L\'{e}vy white noise with characteristic triplet $(a,\gamma,\nu)$ on $\mathcal{S}'_{\omega}=\mathcal{S}'$, thus recovering the sufficient condition of [\ref{Dalang1}, Theorem 3.13, p. 4412].
\end{example}
\begin{example}
Let $\omega(x)=\|x\|^{\beta}$ for $0<\beta<1$. It is easily seen that $\omega$ satisfies the assumptions of Definition \ref{ultra} and furthermore,
\begin{align*}
\omega^{\rightarrow}(\alpha)=\alpha^{1/\beta}
\end{align*}
for $\alpha\in (0,\infty)$. We conclude from Theorem \ref{levyultra} that a L\'{e}vy white noise with characteristic triplet $(a,\gamma,\nu)$ exists on $\mathcal{S}'_\omega$ if $$\int_{|r|>1} (\log(|r|))^{d/\beta}\nu(dr).$$
\end{example}
In the next step we will analyze equation (\ref{carma}) in the space of tempered ultradistributions. We will obtain similar results as in Proposition \ref{prop1}. 
\begin{theorem}\label{temperedultradistributions}
Let $p,q$ be two real polynomials and assume that the rational function $q(i\cdot)/p(i\cdot)$ has a holomorphic extension in a strip $\{z\in\bC^d: \|\Im z\|<\varepsilon\}$ for some $\varepsilon>0$. Furthermore, let $\omega$ be as in Definition \ref{ultra} and $\dot{L}$ be a L\'{e}vy white noise on the space of tempered ultradistribution $\mathcal{S}'_\omega$ under the conditions of Theorem \ref{levyultra}. Then there exists a generalized stationary process $s$ in the space of tempered ultradistributions $\mathcal{S}_\omega'$ such that
\begin{align*}
p(D)s=q(D)\dot{L}.
\end{align*}
Moreover, if $p(i\cdot)$ has no zeroes in the strip, then the solution is unique.
\end{theorem}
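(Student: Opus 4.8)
The plan is to mirror the proof of Proposition \ref{prop1} step by step; the only genuinely new ingredient is that Hörmander's multiplier lemma has to be replaced by an analogue valid on the space $\mathcal{S}_\omega$. Put $\chi:=q(i\cdot)/p(i\cdot)$. By hypothesis $\chi$ extends to a holomorphic function on the tube $T_\varepsilon:=\{z\in\bC^d:\|\Im z\|<\varepsilon\}$, and since $\chi$ is (the restriction of) a rational function, a Cauchy-estimate argument — exactly as in the classical situation of [\ref{Hormander}, Lemma 2] — shows that on every smaller tube $T_{\varepsilon'}$, $\varepsilon'<\varepsilon$, the function $\chi$ together with all its derivatives is bounded by a fixed power $\langle z\rangle^{N}$. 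The key step is then to prove that
\begin{align}\label{multiplierop}
T\varphi:=\mathcal{F}^{-1}\bigl(\chi\,\mathcal{F}\varphi\bigr)
\end{align}
defines a continuous linear operator $\mathcal{S}_\omega\to\mathcal{S}_\omega$ which, since $p$ and $q$ are real and hence $\chi(-\xi)=\overline{\chi(\xi)}$, restricts to a continuous operator $\mathcal{S}_\omega^{real}\to\mathcal{S}_\omega^{real}$.

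The Fourier-side seminorms of $T\varphi$ are controlled easily: $\mathcal{F}(T\varphi)=\chi\,\mathcal{F}\varphi$, so Leibniz' rule, the polynomial bound on the derivatives of $\chi$ and the elementary inequality $\langle\xi\rangle^{N}\lesssim e^{\lambda\omega(\xi)}$ (valid for $\lambda$ large enough by the lower bound $\sigma(t)\ge c+m\log(1+t)$ of Definition \ref{ultra}) give $\pi_{\alpha,\eta}(T\varphi)\lesssim_{\alpha,\eta}\sum_{\beta\le\alpha}\pi_{\beta,\eta+\lambda}(\varphi)$. The space-side seminorms are where the holomorphy hypothesis is used. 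Writing $\chi=\langle\cdot\rangle^{2M}\widetilde{\chi}$ with $M$ large and $\varepsilon'<\min\{\varepsilon,1\}$ (so that $\langle\cdot\rangle^{-2M}$ is holomorphic on $T_{\varepsilon'}$), the factor $\widetilde{\chi}$ is holomorphic and integrable on $T_{\varepsilon'}$, and a contour-shift (Paley–Wiener) argument shows that $\widetilde{K}:=\mathcal{F}^{-1}\widetilde{\chi}$ is continuous with $|\widetilde{K}(x)|\lesssim_\delta e^{-\delta\|x\|}$ for every $\delta<\varepsilon'$, while $T\varphi=\bigl((1-\Delta)^M\varphi\bigr)*\widetilde{K}$. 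Now $\sigma$ is concave with $\sigma(0)=0$, hence subadditive, so $\omega(x)\le\omega(x-y)+\omega(y)$; moreover $\int_0^\infty\sigma(t)(1+t^2)^{-1}\lambda^1(dt)<\infty$ forces $\sigma(t)=o(t)$, whence $\int_{\bR^d}e^{\eta\omega(y)}e^{-\delta\|y\|}\lambda^d(dy)<\infty$ for \emph{every} $\eta>0$. Combining these facts, for any multi-index $\alpha$ and any $\eta>0$,
\begin{align*}
e^{\eta\omega(x)}\bigl|D^\alpha T\varphi(x)\bigr|
&\le\int_{\bR^d}e^{\eta\omega(x-y)}\bigl|D^\alpha(1-\Delta)^M\varphi(x-y)\bigr|\,e^{\eta\omega(y)}|\widetilde{K}(y)|\,\lambda^d(dy)\\
&\lesssim_{\alpha,\eta}\ \textstyle\sum_{|\beta|\le|\alpha|+2M}p_{\beta,\eta}(\varphi),
\end{align*}
so $p_{\alpha,\eta}(T\varphi)<\infty$ with the required continuity bound, and \eqref{multiplierop} is established. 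This space-side estimate is the main obstacle of the proof: one has to trade the holomorphic extension of $\chi$ (which, via Paley–Wiener, produces an exponentially decaying convolution kernel) against the sublinear growth of $\sigma$ forced by the non-quasi-analyticity condition in Definition \ref{ultra} (which is exactly what allows the weight $e^{\eta\omega}$, for arbitrarily large $\eta$, to be absorbed into that kernel).

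The remainder copies Proposition \ref{prop1}. Define $\langle s,\varphi\rangle:=\langle\dot L,T\varphi\rangle$; since $T$ is a continuous linear map on $\mathcal{S}_\omega$ and $\dot L$ is a generalized random process on $\mathcal{S}_\omega'$, $s$ is again a generalized random process on $\mathcal{S}_\omega'$. Using $\mathcal{F}(p(-D)\varphi)=p(-i\cdot)\mathcal{F}\varphi$ one checks — as in Proposition \ref{prop1} — that $\langle s,p(D)^*\varphi\rangle=\langle\dot L,q(D)^*\varphi\rangle$ for all $\varphi$, i.e. $p(D)s=q(D)\dot L$; and $s$ is stationary because $\dot L$ is and $T$ commutes with translations. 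Finally, suppose $u$ is a second solution. Then $p(D)(s-u)=0$ in $\mathcal{S}_\omega'$, and taking Fourier transforms gives $p(i\cdot)\,\mathcal{F}(s-u)=0$. If $p(i\cdot)$ has no zeroes in the strip, then $1/p(i\cdot)$ is holomorphic and polynomially bounded on $T_{\varepsilon'}$, so by the same Paley–Wiener argument as above multiplication by $1/p(i\cdot)$ maps $\mathcal{S}_\omega$ continuously into itself and hence acts on $\mathcal{S}_\omega'$; therefore $\mathcal{F}(s-u)=(1/p(i\cdot))\cdot p(i\cdot)\,\mathcal{F}(s-u)=0$ and $s=u$.
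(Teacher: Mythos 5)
Your existence argument is, in substance, the paper's own proof: the paper also factors the multiplier as $\frac{q(-i\cdot)}{p(-i\cdot)\psi(\cdot)}\cdot\psi(\cdot)$ with $\psi(z)=(1+\sum_{j}z_j^2)^{\alpha}$, so that the operator becomes $\varphi\mapsto G\ast(1-\Delta)^{\alpha}\varphi$ with $G=\mathcal{F}^{-1}\frac{q(-i\cdot)}{p(-i\cdot)\psi(\cdot)}$ decaying like $e^{-\delta\|x\|/2}$ by the Paley--Wiener theorem, and it then absorbs the weight $e^{\eta\omega}$ into $G$ exactly as you do, via the subadditivity of $\omega$ and its sublinear growth; your $\langle\cdot\rangle^{2M}\widetilde{\chi}$ decomposition and the kernel $\widetilde{K}$ are the paper's $\psi$ and $G$ under other names, and your Leibniz-rule treatment of the Fourier-side seminorms matches the paper's display involving the rational functions $p_\beta$. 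The one genuine divergence is the uniqueness step: the paper disposes of $p(D)(s-u)=0$ by invoking the proof of the Liouville-type theorem [\ref{Kaneko}, Proposition 2.2], whereas you divide by $p(i\cdot)$ on the Fourier side. Your route is more self-contained, but be aware that it rests on the claim that multiplication by $1/p(i\cdot)$ maps $\mathcal{S}_\omega$ continuously into itself, which is not automatic --- one must again control the Fourier transform of $\varphi/p(i\cdot)$, i.e.\ rerun the contour-shift estimate. Since this is precisely the $q\equiv 1$ instance of the multiplier lemma you already proved (with the roles of a function and its Fourier transform exchanged, which is harmless because $\mathcal{S}_\omega$ is Fourier-invariant), the argument does close, and it buys you an elementary replacement for the external citation.
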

\begin{proof}
Observe that for every $c>0$ there exists an $n\in\bN$ such that $\omega(x)\le n+ c\| x\|$ for all $x\in\bR^d$, otherwise, the assumption of Definition \ref{ultra} can not hold true. Now choose $\alpha\in\bN$ such that 
$$
\sup_{\|\eta\|\le \delta} \left\|  \frac{q(-i\cdot+\eta)}{p(-i\cdot+\eta)\psi(\cdot+i\eta)}\right\|_{L^1(\bR^d)}<\infty
$$
for some $0<\delta<\varepsilon$, where $\psi(z):=(1+\sum\limits_{j=1}^d z_j^2)^{\alpha}$. It is not immediately clear that such an $\alpha\in \bN$ exists,  but by a similar argument as in  the proof of [\ref{Hormander}, Lemma 2, p. 557] we conclude that such an $\alpha$ exists. We define $G:=\mathcal{F}^{-1}\frac{q(-i\cdot)}{p(-i\cdot)\psi(\cdot)}$ and we observe that there exists a constant $C>0$ such that $|G(x)|\le C\exp\left(-\frac{\delta}{2}\|x\|\right)$ for all $x\in \bR^d$, see [\ref{Reed}, Theorem IX.14, p. 18]. One infers by the subadditivity of $\omega$ (see [\ref{Triebel1}, Remark 2, p. 246]) for $\varphi\in\mathcal{S}_\omega$ that
\begin{align}
\nonumber |G\ast\varphi(x)|&\le \int\limits_{\bR^d}|G(y) \varphi(x-y)|\lambda^d(dy)\\
\label{sufficient}&\le p_{0,\eta}(\varphi)\int\limits_{\bR^d}|G(y) e^{-\eta \omega(y-x)}|\lambda^d(dy)\le p_{0,\eta}(\varphi)e^{-\eta \omega(x)}\int\limits_{\bR^d}|G(y) e^{\eta \omega(y)}|\lambda^d(dy)
\end{align}
for every $\eta>0$. We conclude that $|G\ast \varphi(x)|\le C e^{-\eta \omega(x)}$ for some constant $C>0$ and as $D^{\alpha}\varphi \in \mathcal{S}_{\omega}$, we conclude that $$\sup_{x\in\bR^d}e^{\eta \omega(x)}\|D^{\alpha}G\ast \varphi(x)\|=\sup_{x\in\bR^d}e^{\eta \omega(x)}\|G\ast D^\alpha\varphi(x)\|<\infty$$
for every $\eta>0$. Moreover, for the Fourier-transform of $G\ast\varphi$ it is easy to see that
\begin{align}\label{fourierultra}
\sup_{z\in\bR^d}e^{\eta\omega(z)}\|D^{\alpha} \mathcal{F}(G\ast\varphi)(z)\|=\sup_{z\in\bR^d}e^{\eta\omega(z)}\|\sum\limits_{|\beta|\le |\alpha|} p_\beta(z) D^{\beta}\mathcal{F}\varphi(z)\|<\infty,
\end{align}
which follows by  [\ref{Hormander}, Lemma 2, p. 557] for every $\eta>0$, where $p_\beta$ are rational functions well-defined on $\{z\in\bC^d: \|\Im z\|<\varepsilon\}$ for every $|\beta|\le |\alpha|$. So by similar estimates as in (\ref{sufficient}) and (\ref{fourierultra}) one sees that
\begin{align}\label{operatorultra}
\varphi\mapsto\mathcal{F}^{-1} \frac{q(-i\cdot)}{p(-i\cdot)\psi(\cdot)} \mathcal{F} \psi(\cdot)\varphi
\end{align}
defines a continuous operator from $\mathcal{S}_{\omega}$ to $\mathcal{S}_\omega$. Hence
\begin{align*}
\langle s,\varphi\rangle=\langle \dot{L}, \mathcal{F}^{-1} \frac{q(-i\cdot)}{p(-i\cdot)\psi(\cdot)} \mathcal{F} \psi(\cdot)\varphi\rangle,
\end{align*}
defines a generalized random process $s:(\Omega,\mathcal{F})\to (\mathcal{S}'_\omega,C(\mathcal{S}'_{\omega}))$. That is solves (\ref{carma}) follows as in [\ref{Berger}, Theorem 4.3].\\
The uniqueness of the solution $s$ follows by the proof of [\ref{Kaneko}, Proposition 2.2].
\end{proof}
We have shown so far that for every L\'{e}vy white noise $\dot{L}$ with characteristic triplet $(a,\gamma,\nu)$ living in $S'_{\omega}$ for every $\omega$ satisfying the assumptions of Definition \ref{ultra} there exists a unique solution $s$ of the equation
\begin{align*}
p(D)s=q(D)\dot{L},
\end{align*}
if $p(i\xi)$ has no zeroes in a strip around $\bR^d$. In \cite{Berger}, Theorem 3.4 we have seen that we obtain a solution $s$ in the space of distributions $\mathcal{D}'$ in the case that
\begin{align}\label{cond1}
\int_{|r|>1} \log(|r|)^d\nu(dr)<\infty,
\end{align}
but it seems difficult to find such $\omega$ such that $\dot{L}$ would be living in $S'_{\omega}$ if the L\'{e}vy white noise satisfies (\ref{cond1}), as (\ref{cond1}) does not imply the condition of Theorem \ref{levyultra} for any suitable weight functions $\omega$. Therefore, we use another more suitable space for the analysis of (\ref{carma}) under the assumption of (\ref{cond1}), the space of analytic functions with rapid decay and its topological dual, the Fourier hyperspace.
\begin{definition}
The space $\mathcal{P}_*$ consists of all functions $\varphi\in C^\infty(\bR^d,\bC)$ which have an analytic continuation on a strip 
\begin{align*}
A_\delta:=\{z\in \bC^d:\, \|\Im z\|<\delta\}
\end{align*}
for some $\delta>0$ and it holds that
\begin{align}\label{se}
\sup_{z\in A_l} |\exp((\delta-\varepsilon)\|z\|)\varphi(z)|<\infty
\end{align}
for every $0,\varepsilon,l<\delta$.
The space $\mathcal{P}_*$ is nuclear with its inductive topology, i.e. a sequence $(\varphi_n)_{n\in\bN}\subset \mathcal{P}_*$ converges to $0$ if and only if there exists a $\delta>0$ such that $\varphi_n$ has an analytic continuation in $A_\delta$ for every $n\in \bN$ and 
\begin{align*}
\sup_{z\in A_{\delta/2}} |\exp(\delta/2\|z\|)\varphi_n(z)|\to 0 \textrm{ for }n\to\infty,
\end{align*}
see [\ref{Kaneko1}, p. 408]. We denote by $\mathcal{Q}$ its topological dual and call it the \emph{space of Fourier hyperfunctions}. 
\end{definition}
We show first that there exists a L\'{e}vy white noise on $\dot{L}$ with characteristic triplet $(a,\gamma,\nu)$ on $\mathcal{Q}$ if (\ref{cond1}) holds. Observe that we split a function $\varphi \in C^\infty(\bR^d,\bC)$ in its real and imaginary part and prove on each part separately the existence of the L\'{e}vy white noise. Then $\langle \dot{L},\varphi\rangle:=\langle \dot{L},\Re \varphi\rangle+i\langle \dot{L},\Im \varphi\rangle$.  
\begin{proposition}\label{fh}
Let $(a,\gamma,\nu)$ be a characteristic triplet such that (\ref{cond1}) holds true. Then there exists a L\'{e}vy white noise on $(\mathcal{Q},C(\mathcal{Q}))$.
\end{proposition}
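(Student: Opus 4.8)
The plan is to obtain $\dot L$ from the Bochner--Minlos theorem, exactly along the lines of the proof of Theorem \ref{levyultra} and of [\ref{Berger}, Theorem 3.4]. Since $\mathcal{P}_*$ is nuclear, so is the closed real subspace $\mathcal{P}_*^{real} := \mathcal{P}_* \cap \{\varphi : \varphi(x)\in\bR \text{ for all } x\in\bR^d\}$, and it suffices to prove that, with $\psi$ as in the definition of L\'evy white noise, the functional
\[
\mathcal{P}(\varphi) := \exp\left(\int_{\bR^d}\psi(\varphi(u))\,\lambda^d(du)\right)
\]
is well defined, continuous and positive-definite on $\mathcal{P}_*^{real}$ with $\mathcal{P}(0)=1$; then [\ref{Fageot2}, Theorem 1, p. 1186] yields a L\'evy white noise on $(\mathcal{Q},C(\mathcal{Q}))$ with triplet $(a,\gamma,\nu)$, and the passage to complex test functions is by the splitting $\langle\dot L,\varphi\rangle := \langle\dot L,\Re\varphi\rangle + i\langle\dot L,\Im\varphi\rangle$ as above. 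Here $\mathcal{P}(0)=1$ is trivial and positive-definiteness follows, just as in Theorem \ref{levyultra}, from a denseness argument in the spirit of [\ref{Fageot2}, Proposition 2, p. 1187], so the whole matter reduces to the continuity of $\mathcal{P}$ (well-definedness being contained in the same estimates).

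For continuity, let $\varphi_n\to 0$ in $\mathcal{P}_*$. By the description of the inductive topology there is a single $\delta>0$ such that each $\varphi_n$ extends analytically to $A_\delta$ and $c_n := \sup_{z\in A_{\delta/2}}|e^{(\delta/2)\|z\|}\varphi_n(z)|\to 0$; in particular $|\varphi_n(x)|\le c_n e^{-(\delta/2)\|x\|}$ for all $x\in\bR^d$, so $\varphi_n\to 0$ in $L^1(\bR^d)\cap L^2(\bR^d)$. The drift and Gaussian parts of $\int_{\bR^d}\psi(\varphi_n(u))\lambda^d(du)$ are then bounded by $|\gamma|\,\|\varphi_n\|_{L^1}+\frac12 a\,\|\varphi_n\|_{L^2}^2\to 0$, and the small-jump part by $\frac12\|\varphi_n\|_{L^2}^2\int_{|x|\le 1}x^2\nu(dx)\to 0$ using $|e^{i\xi}-1-i\xi|\le\xi^2/2$. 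The only delicate term is the large-jump part; bounding $|e^{i\xi}-1|\le\min(2,|\xi|)$ and applying the same layer-cake manipulation as in the proof of Theorem \ref{levyultra} reduces it, up to constants, to
\[
\int_{|r|>1}|r|\int_0^{1/|r|}d_{\varphi_n}(\alpha)\,\lambda^1(d\alpha)\,\nu(dr).
\]
Writing $\rho(x):=e^{-(\delta/2)\|x\|}$, a direct computation gives $d_\rho(\alpha)=c_d\big(\tfrac{2}{\delta}\log(\alpha^{-1})\big)^d$ for $\alpha\in(0,1)$ and $d_\rho(\alpha)=0$ for $\alpha\ge1$, whence $|r|\int_0^{1/|r|}d_\rho(\alpha)\,\lambda^1(d\alpha)\le C\big(1+(\log|r|)^d\big)$ for all $|r|>1$, using the standard estimate $\int_x^\infty(\log u)^2u^{-2}\,du\le C_d(\log x)^d/x$ for large $x$. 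Since $\nu$ is finite on $\{|r|>1\}$, this bound is $\nu$-integrable there precisely by the hypothesis (\ref{cond1}). For $n$ large enough that $c_n\le1$ we have $d_{\varphi_n}\le d_\rho$, so $\one_{\{0<\alpha<1/|r|\}}\,|r|\,d_\rho(\alpha)$ is an integrable dominating function, while for each fixed $\alpha>0$ one has $d_{\varphi_n}(\alpha)=0$ as soon as $c_n<\alpha$. Lebesgue's dominated convergence theorem then forces the displayed integral to $0$, hence $\int_{\bR^d}\psi(\varphi_n(u))\,\lambda^d(du)\to 0$ and $\mathcal{P}(\varphi_n)\to 1=\mathcal{P}(0)$, exactly as in [\ref{Berger}, Theorem 3.4].

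I expect the only real obstacle to be this large-jump estimate: one must match the exponential decay rate available for test functions in $\mathcal{P}_*$ with the logarithmic moment condition (\ref{cond1}) and, crucially, exhibit a dominating function independent of $n$ that is $\nu$-integrable over $\{|r|>1\}$. Everything else---nuclearity of $\mathcal{P}_*^{real}$, the routine $L^1$/$L^2$ bounds on the drift, Gaussian and small-jump terms, positive-definiteness by approximation, and the real/imaginary splitting---is either granted in the excerpt or a verbatim repetition of the arguments already used for Theorem \ref{levyultra}.
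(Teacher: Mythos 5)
Your proposal is correct and follows essentially the same route as the paper's proof: Bochner--Minlos on the nuclear space $\mathcal{P}_*$, with the distribution function of the exponential envelope $e^{-(\delta/2)\|x\|}$ yielding the $(\log|r|)^d$ bound that matches hypothesis (\ref{cond1}) and permits dominated convergence for the large-jump term. In fact you supply more detail than the paper, which delegates these steps to Theorem \ref{levyultra} and [\ref{Berger}, Example 3.8]; the only blemish is the typo $(\log u)^2$ where $(\log u)^d$ is meant in your auxiliary integral estimate.
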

\begin{proof}
The proof is very similar to that of Theorem \ref{levyultra}. At first we observe for $\rho(x):=D\exp(-\delta\|x\|)$ for some $D,\delta>0$ that
\begin{align*}
d_{\rho}(\alpha)&=\lambda^d(\{x\in \bR^d:\rho(x)>\alpha\})\\
&=\lambda^d(\{x\in \bR^d:\|x\|< -\frac{1}{\delta}\log(\alpha/D)\})\\
&=-C \log(\alpha/D)^{d}
\end{align*} 
 for some constant $C>0$ for all $\alpha< D$. So  for every sequence $(\varphi_n)_{n\in \bN}\subset \mathcal{P}_*$ convgerging to $0$ we obtain similar to \cite{Berger}, Example 3.8 that 
\begin{align*}
\int_{|r|>1}|r|\int_{0}^{1/|r|} d_{\varphi_n}(\alpha)\lambda^1(d\alpha)\nu(dr)\to 0,\,n\to\infty.
\end{align*}
By following the same argumentation as in the proof of Theorem \ref{levyultra} one infers that there exists a L\'{e}vy white noise $\dot{L}$ on $\mathcal{Q}$.
\end{proof} 
As a final step we prove the unique solvability of equation (\ref{carma}) in $\mathcal{Q}$.
\begin{theorem}\label{sfh}
Let $\dot{L}$ be a L\'{e}vy white noise on $\mathcal{Q}$. Assume that  $p,q$ be two real polynomials such that the rational function $q(i\cdot)/p(i\cdot)$ has a holomorphic extension in a strip $\{z\in\bC^d: \|\Im z\|<\varepsilon\}$ for some $\varepsilon>0$.Then there exists a generalized stationary process $s$ in $\mathcal{Q}$ such that
\begin{align*}
p(D)s=q(D)\dot{L}.
\end{align*}
Moreover, if $p$ has no zeroes in the strip, than the solution is unique.
\end{theorem}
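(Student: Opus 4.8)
The plan is to transcribe the proof of Theorem~\ref{temperedultradistributions}, replacing the subadditive weight $\omega$ by the linear weight $x\mapsto\delta\|x\|$ and the seminorms of $\mathcal{S}_\omega$ by the defining seminorms of $\mathcal{P}_*$. Since the hypothesis on $q(i\cdot)/p(i\cdot)$ is exactly the one used there, the argument behind [\ref{Hormander}, Lemma~2] again produces an $\alpha\in\bN$ and a number $0<\delta<\varepsilon$ such that, with $\psi(z):=(1+\sum_{j=1}^{d}z_j^{2})^{\alpha}$, one has $\sup_{\|\eta\|\le\delta}\|\,q(-i\cdot+\eta)/(p(-i\cdot+\eta)\psi(\cdot+i\eta))\,\|_{L^1(\bR^d)}<\infty$; hence the kernel $G:=\mathcal{F}^{-1}\bigl(q(-i\cdot)/(p(-i\cdot)\psi(\cdot))\bigr)$ satisfies $|G(x)|\le C\exp(-\tfrac{\delta}{2}\|x\|)$ on $\bR^d$ by [\ref{Reed}, Theorem~IX.14]. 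First I would show that the operator of (\ref{operatorultra}), which here reads $T\varphi=G\ast(\psi(D)\varphi)$ with $\psi(D)\varphi:=\mathcal{F}^{-1}(\psi\,\mathcal{F}\varphi)$ a constant-coefficient differential operator applied to $\varphi$, maps $\mathcal{P}_*$ continuously into $\mathcal{P}_*$. Here $\psi(D)\varphi$ stays in $\mathcal{P}_*$ because differentiating a function that is analytic and exponentially decaying on a strip leaves it in that class (with a negligible loss in the decay rate of (\ref{se})). The key point is that the convolution of the exponentially decaying kernel $G$ with a function analytic and exponentially decaying on a strip $A_{\delta_1}$ is again of this type on every slightly narrower strip $A_{\delta_2}$: analyticity on $A_{\delta_2}$ follows by differentiating under the integral sign using the exponential decay of $G$, and the bound (\ref{se}) for $T\varphi$ follows from $\|z\|\le\|z-y\|+\|y\|$ together with the integrability of $y\mapsto\exp(-\tfrac{\delta}{2}\|y\|)$, exactly as in the estimates (\ref{sufficient}) and (\ref{fourierultra}). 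Making these estimates quantitative in the seminorms — recalling that a sequence converges in $\mathcal{P}_*$ precisely when it does so uniformly on a common strip — yields the continuity of $T$.

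Granting this, I would set $\langle s,\varphi\rangle:=\langle\dot{L},T\varphi\rangle$ for $\varphi\in\mathcal{P}_*$. Since $T$ is linear and continuous and $\dot{L}\colon(\Omega,\mathcal{F})\to(\mathcal{Q},C(\mathcal{Q}))$ is measurable (Proposition~\ref{fh}), for every finite family $\varphi_1,\dots,\varphi_n\in\mathcal{P}_*$ the map $\omega\mapsto(\langle\dot{L}(\omega),T\varphi_j\rangle)_{j=1}^{n}$ is measurable, and for fixed $\omega$ the functional $\varphi\mapsto\langle\dot{L}(\omega),T\varphi\rangle$ is continuous on $\mathcal{P}_*$; hence $s$ is a $\mathcal{Q}$-valued generalized random process. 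That $s$ solves $p(D)s=q(D)\dot{L}$, i.e.\ $\langle s,p(D)^{*}\varphi\rangle=\langle\dot{L},q(D)^{*}\varphi\rangle$ for all $\varphi\in\mathcal{P}_*$, is the same formal Fourier-side computation as in [\ref{Berger}, Theorem~4.3] and in the proof of Theorem~\ref{temperedultradistributions} (using $\mathcal{F}\,p(-D)\varphi=p(-i\cdot)\mathcal{F}\varphi$ and the way the factors $p$, $q$ and $\psi$ combine). Finally, $T$ commutes with translations because it is a Fourier multiplier, and since $\varphi(\cdot-t)\in\mathcal{P}_*$ for $\varphi\in\mathcal{P}_*$, the stationarity of $s$ is inherited from that of $\dot{L}$, exactly as in Proposition~\ref{prop1}.

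For the uniqueness statement, suppose $u$ is a second solution; then $\langle s-u,p(D)^{*}\varphi\rangle=0$ for every $\varphi\in\mathcal{P}_*$. If $p(i\cdot)$ has no zero in the strip $\{\|\Im z\|<\varepsilon\}$, then $1/p(i\cdot)$ is holomorphic there with at most polynomial growth, so by the same $\psi$-regularization the operator $\varphi\mapsto\mathcal{F}^{-1}\bigl(1/(p(-i\cdot)\psi(\cdot))\bigr)\mathcal{F}\,\psi(D)\varphi$ is a continuous right inverse of $p(D)^{*}\colon\mathcal{P}_*\to\mathcal{P}_*$; in particular $p(D)^{*}\mathcal{P}_*=\mathcal{P}_*$, and therefore $\langle s-u,\phi\rangle=0$ for every $\phi\in\mathcal{P}_*$, i.e.\ $s=u$ in $\mathcal{Q}$. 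This is the usual division-theorem argument for Fourier hyperfunctions, which can be quoted from the proof of [\ref{Kaneko}, Proposition~2.2] (with $\mathcal{P}_*$ playing the role of the test space there), as was already done for Theorem~\ref{temperedultradistributions}.

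The main obstacle is the first step, namely that $T$ genuinely maps $\mathcal{P}_*$ into itself: one must check both that $G\ast(\psi(D)\varphi)$ still admits a holomorphic extension to some strip — which requires the contour-shift / differentiation-under-the-integral argument based on the exponential decay of $G$ — and that this extension obeys (\ref{se}) with the correct, slightly diminished, exponential rate, together with the analogous bound for its Fourier transform. The careful bookkeeping of the nested strips $A_\delta\supset A_{\delta_1}\supset A_{\delta_2}$ and of the decay exponents is the only genuinely delicate point; everything else — the measurability of $s$, the verification that it solves the equation, the stationarity, and the uniqueness — is a routine transcription of the corresponding arguments for tempered ultradistributions.
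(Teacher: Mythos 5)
Your proposal follows essentially the same route as the paper: the same kernel $G=\mathcal{F}^{-1}\bigl(q(-i\cdot)/(p(-i\cdot)\psi(\cdot))\bigr)$ with the exponential bound from [\ref{Reed}, Theorem IX.14], the same factorization of the solution operator as $\varphi\mapsto G\ast(1-\Delta)^{\alpha}\varphi$ (your $\psi(D)$ is exactly $(1-\Delta)^{\alpha}$), the same two-step continuity check on $\mathcal{P}_*$ (Cauchy's formula for the differential part, the convolution estimate as in (\ref{sufficient}) for $G\ast{}$), the same definition $\langle s,\varphi\rangle=\langle\dot L,T\varphi\rangle$, and the same appeal to the division-theorem argument of [\ref{Kaneko}, Proposition 2.2] for uniqueness. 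The level of detail you flag as the delicate point (the nested strips and decay rates for $G\ast\varphi$) is precisely what the paper also defers to the method of Theorem \ref{temperedultradistributions}, so the proposal is correct and matches the paper's proof.
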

\begin{proof}
The uniqueness when $p$ has no zeroes on the strip follows in the same manner as in Proposition \ref{prop1}  by the proof of [\ref{Kaneko}, Proposition 2.2].\\
For the existence of the stationary solution $s$, let $G$, $\psi$ and $\alpha$ be as in the proof of Theorem \ref{temperedultradistributions}. Since $(\mathcal{F}^{-1}\psi(\cdot)\mathcal{F} \varphi)=(1-\Delta)^{\alpha}\varphi$, it follows similarity to the proof of [\ref{Berger}, Theorem 4.3] that it is sufficient to show that
$$T:\mathcal{P}_*\to \mathcal{P}_*,\,\varphi\mapsto G\ast (1-\Delta)^{\alpha}\varphi$$
is continuous. To see this let $(\varphi_n)_{n\in \bN}\subset \mathcal{P}_*$ be converging to $0$, i.e. there exists a $\delta>0$ such that $\varphi_n$ has an analytic continuation in $A_\delta$ for every $n \in \bN$ and $\sup_{z\in A_\delta}|\exp(\delta \|z\|) \varphi_n(z)|\to 0$ for $n\to \infty$. Then it holds by Cauchy's integral formula for derivatives that $(1-\Delta)^{\alpha}\varphi_n\in \mathcal{P}_*$ for every $n\in\bN$ and $(1-\Delta)^{\alpha}\varphi_n\to 0$ for $n\to\infty$ in $\mathcal{P}_*$. So it is sufficient to show that $\tilde{T} :\mathcal{P}_*\to \mathcal{P}_*$ defined by $\tilde{T}(\varphi):=G\ast \varphi$ is continuous. This follows easily by the same method as in the proof of Theorem \ref{temperedultradistributions}. Therefore we obtain a mapping $s:\Omega\to \mathcal{Q}$ defined by $s(\varphi):=\langle\dot{L}, G\ast (1-\Delta)^{\alpha}\varphi\rangle$ for every $\varphi \in \mathcal{P}_*$, which solves (\ref{carma}) and is stationary.
\end{proof}
As a L\'{e}vy white noise $\dot{L}$ on $\mathcal{S}'(\bR^d)$ with characteristic triplet $(a,\gamma,\nu)$ such that $\int_{|r|>1} |r|^{\varepsilon}\nu(dr)<\infty$ lives in $\mathcal{S}'(\bR^d)$, it is only natural to ask if the L\'{e}vy white noise can be constructed on certain negative Sobolev spaces with some weights. In [\ref{Fageot1}] it was shown that $P(\dot{L}\in W^{\tau}_{2}(\bR^d,\rho))=1$ for $\tau<-d/2$ and $\rho<-d/\min\{\varepsilon,2\}$. Indeed, there exists even a generalized process on $(W^{\tau}_2(\bR^d,\rho), \mathcal{B}^*(W^{\tau}_2(\bR^d,\rho)))$ which follows by [\ref{Ito}, Theorem 1.2.4, p.6]. Moreover,  $\dot{L}$ can be seen as a random variable on the space $(W^{\tau}_2(\bR^d,\rho), \mathcal{B}(W^{\tau}_2(\bR^d,\rho)))$, which is just the Borel $\sigma-$field generated by the strong topology on $W^{\tau}_2(\bR^d,\rho)$, see [\ref{Ito}, p. 6].
In this case, the solution $s$ of $(\ref{carma})$ can be identified with a random variable on a weighted Sobolev space, or more generally weighted Besov spaces, too:
\begin{lemma}\label{Besov}
Let  $p,q$ be polynomials in $d$ variables such that there exists $\kappa \in (0,\infty)$ such that 
\begin{align}\label{eqregular1}
\left|D^{\gamma}\frac{q(i\xi)}{p(i\xi)}\right|\le c_\gamma \langle \xi\rangle^{-\kappa-|\gamma|}
\end{align}
for every $\gamma\in \bN^d_0$, where $c_\gamma \ge 0$.
Let $\dot{L}$ be a L\'{e}vy white noise on $\mathcal{S}'$ with characteristic triplet $(a,\gamma,\nu)$ such that $\int_{|r|>1} |r|^\varepsilon \nu(dr)<\infty$ for some $\varepsilon>0$. Let $\rho<-d/\min\{2,\varepsilon\}$, $l<-\frac{d}{2}$ and choose a version of $\dot{L}$ in the Sobolev space $W^l_2(\bR^d,\rho)$ as described above.\\
 Then there exists a solution $s$ of (\ref{carma}) in $\mathcal{S}'$ which almost surely lies in $B^{\tau+\kappa}_{r,r}(\bR^d,\rho)$ whenever $r\in [2,\infty]$ and $\tau\le l+d\left(\frac{1}{r}-\frac{1}{2}\right)$, and even is a random variable in $(B^{\tau+\kappa}_{r,r}(\bR^d,\rho),\mathcal{B}(B^{\tau+\kappa}_{r,r}(\bR^d,\rho)))$.
\end{lemma}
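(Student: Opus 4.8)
\emph{Strategy.} The plan is to realise $s$ as the image of the fixed Sobolev-space version of $\dot L$ under a Fourier multiplier operator of order $-\kappa$, and then to combine a smoothing property of such operators on weighted Besov spaces with the Sobolev--Besov embedding of Proposition \ref{embeddings}. Write $m(\xi):=q(i\xi)/p(i\xi)$; assumption (\ref{eqregular1}) forces $p(i\xi)\neq 0$ for all $\xi\in\bR^d$, so $m$ is smooth on $\bR^d$ and satisfies the Mikhlin--H\"ormander symbol estimates $|D^\gamma m(\xi)|\le c_\gamma\langle\xi\rangle^{-\kappa-|\gamma|}$. In particular $m$ and all its derivatives are bounded, so multiplication by $m$ maps $\mathcal{S}(\bR^d)$ continuously into itself and, by transposition, $T_m:=\mathcal{F}^{-1}(m\,\mathcal{F}\,\cdot)$ acts continuously on $\mathcal{S}'(\bR^d)$. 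Let $\dot L_\rho$ denote the version of $\dot L$ with values in $W^l_2(\bR^d,\rho)$ fixed in the statement, and let $s$ be the generalized process obtained from $\dot L_\rho$ by the construction of Proposition \ref{prop1} (with $h=q$, $l=p$, which is licit since $p(i\cdot)$ has no real zeroes); this amounts to applying to $\dot L_\rho$ the multiplier operator $T_m$, up to the harmless reflection $\xi\mapsto-\xi$ of the symbol coming from transposition, which again satisfies (\ref{eqregular1}). Exactly as in the proof of Proposition \ref{prop1}, $s$ solves (\ref{carma}) --- since $p(i\cdot)m(\cdot)=q(i\cdot)$ --- and is stationary because $\dot L$ is.

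\emph{The key estimate.} The analytic heart of the proof is the following: for every $\sigma\in\bR$, every $r\in[2,\infty]$ and every $\rho\in\bR$, the operator $T_m$ maps $B^{\sigma}_{r,r}(\bR^d,\rho)$ continuously into $B^{\sigma+\kappa}_{r,r}(\bR^d,\rho)$. To prove it one writes $\Delta_k T_m f=\mathcal{F}^{-1}(\varphi_k m\,\mathcal{F}f)$ and decomposes $\varphi_k m=2^{-k\kappa}(2^{k\kappa}\varphi_k m)$. By (\ref{eqregular1}) and the Leibniz rule, $2^{k\kappa}\varphi_k m$ is supported in a fixed dyadic annulus and bounded in $C^N(\bR^d)$ uniformly in $k$ for every $N$; rescaling the frequency variable by $2^{k}$ then shows that the kernels satisfy $\|\mathcal{F}^{-1}(2^{k\kappa}\varphi_k m)\|_{L^1(\bR^d,|\rho|)}\le C$ uniformly in $k$. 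Peetre's inequality $\langle x\rangle^{\rho}\langle x-y\rangle^{-\rho}\le C\langle y\rangle^{|\rho|}$ together with Young's inequality make convolution against $\mathcal{F}^{-1}(2^{k\kappa}\varphi_k m)$ bounded on $L^r(\bR^d,\rho)$ uniformly in $k$, so that $2^{(\sigma+\kappa)k}\|\Delta_k T_m f\|_{L^r(\bR^d,\rho)}\le C\sum_{|j-k|\le 2}2^{\sigma j}\|\Delta_j f\|_{L^r(\bR^d,\rho)}$; taking the $l^r$-norm in $k$ (the overlap being finite) yields $\|T_m f\|_{B^{\sigma+\kappa}_{r,r}(\bR^d,\rho)}\le C\|f\|_{B^{\sigma}_{r,r}(\bR^d,\rho)}$, and the endpoint $r=\infty$ is handled identically. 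Fourier multiplier theorems of this type for weighted Besov spaces are available in \cite{Triebel3} and \cite{Triebel1}.

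\emph{Conclusion.} By construction $\dot L_\rho\in W^l_2(\bR^d,\rho)=B^l_{2,2}(\bR^d,\rho)$ almost surely. Since $r\ge 2$ and $\tau\le l+d(\frac{1}{r}-\frac{1}{2})$ we have $\tau\le l$ and $l-\tau\ge \frac{d}{2}-\frac{d}{r}$, so Proposition \ref{embeddings} (applied with $p_0=2,\tau_0=l,\rho_0=\rho$ and $p_1=r,\tau_1=\tau,\rho_1=\rho$) gives a continuous embedding $B^l_{2,2}(\bR^d,\rho)\hookrightarrow B^{\tau}_{r,r}(\bR^d,\rho)$; hence $\dot L_\rho\in B^{\tau}_{r,r}(\bR^d,\rho)$ almost surely, and the key estimate with $\sigma=\tau$ gives $s=T_m\dot L_\rho\in B^{\tau+\kappa}_{r,r}(\bR^d,\rho)$ almost surely. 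For the measurability claim, $\dot L_\rho\colon\Omega\to W^l_2(\bR^d,\rho)$ is Borel measurable as recalled before the lemma, while the embedding $B^l_{2,2}(\bR^d,\rho)\hookrightarrow B^{\tau}_{r,r}(\bR^d,\rho)$ and the operator $T_m\colon B^{\tau}_{r,r}(\bR^d,\rho)\to B^{\tau+\kappa}_{r,r}(\bR^d,\rho)$ are continuous, hence Borel measurable; their composition $s$ is therefore a random variable with values in $(B^{\tau+\kappa}_{r,r}(\bR^d,\rho),\mathcal{B}(B^{\tau+\kappa}_{r,r}(\bR^d,\rho)))$.

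\emph{Main difficulty.} Everything except the key estimate is a direct application of Propositions \ref{prop1} and \ref{embeddings}. The delicate point is the weighted multiplier estimate of the second paragraph: on unweighted Besov spaces it is classical, but with the polynomial weight $\langle\cdot\rangle^{\rho}$ one must verify that the $L^r(\bR^d,\rho)$-bounds for convolution against the rescaled annular bumps $\mathcal{F}^{-1}(2^{k\kappa}\varphi_k m)$ are genuinely uniform in $k$ (this is where Peetre's inequality and the admissibility of the weight enter) and cover the endpoint $r=\infty$; locating or adapting the precise statement within Triebel's framework is the main work.
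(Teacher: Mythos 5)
Your proposal is correct and follows essentially the same route as the paper: realize $s=T_m\dot L$ for the order-$(-\kappa)$ Fourier multiplier $m=q(i\cdot)/p(i\cdot)$ built on the construction of Proposition \ref{prop1}, and combine a weighted lifting estimate with the embedding of Proposition \ref{embeddings}. The only differences are cosmetic: the paper applies the multiplier on $W^l_2(\bR^d,\rho)$ first (citing [\ref{Edmunds}, Theorem 5.4.2, p. 224] for continuity $W^l_2(\bR^d,\rho)\to W^{l+\kappa}_2(\bR^d,\rho)$) and then embeds into $B^{\tau+\kappa}_{r,r}(\bR^d,\rho)$, whereas you embed first and then prove the $B^{\tau}_{r,r}(\bR^d,\rho)\to B^{\tau+\kappa}_{r,r}(\bR^d,\rho)$ multiplier bound by hand via Littlewood--Paley and Peetre's inequality --- an estimate the same reference already supplies.
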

\begin{proof}
By [\ref{Edmunds}, Theorem 5.4.2, p. 224] we conclude that $\varphi\mapsto \mathcal{F}^{-1} \frac{q(i\cdot)}{p(i\cdot)} \mathcal{F}\varphi$ defines a continuous operator both from $\mathcal{S}'$ to $\mathcal{S}'$ and from  $W^{l}_2(\bR^d,\rho)$ to $W^{l+\kappa}_2(\bR^d,\rho)$. We conclude by construction of $s$ in (\ref{construction})  that we have a solution in $\mathcal{S}'$ which is also in $(W^{l+\kappa}_{2}(\bR^d,\rho), \mathcal{B}(W^{l+\kappa}_2(\bR^d,\rho)))$. The rest follows easily by Proposition \ref{embeddings}.
\end{proof}
Observe that if $\kappa>d\left(1-\frac{1}{r}\right)$, with $r\ge2$ we can choose $l$ and $\tau$ from above such that $\tau+\kappa>0$. In this case, $s$ has positive regularity and can be identified with a random field on $\bR^d$ via the mapping of (\ref{mapping}).
\begin{example}
Let $p(D)=(\lambda-\Delta)^{\alpha}$ for $\alpha\in \bN$ and $\lambda>0$ and $q(D)=1$. Then (\ref{eqregular1}) is satisfied for $\kappa=2\alpha$, see [\ref{Grafakos}, Example 6.2.9, p. 449].
\end{example}
\section{Semilinear stochastic partial differential equations}\label{semilinearspdes}
Our goal of this section is to study the semilinear stochastic partial differential equation
\begin{align}\label{eq11}
p(D)s=g(\cdot,s)+\dot{L},
\end{align}
where $\dot{L}$ is a L\'{e}vy white noise on $\mathcal{S}'$ with characteristic triplet $(a,\gamma,\nu)$ such that $\int_{|r|>1}|r|^{\varepsilon}\nu(dr)<\infty$ for some $\varepsilon>0$, $p$ is a polynomial in $d$ variables and $g:\bR^d\times \bC\to \bR$ a sufficiently nice function. We assume that the L\'{e}vy white noise is the modified version on the measurable space $(B^{l}_{2,2}(\bR^d,\rho), \mathcal{B}(B^{l}_{2,2}(\bR^d,\rho)))$, where $l<-d/2$ and $\rho<-\frac{d}{\min\{2,\varepsilon\}}$. We are looking for a $B^\beta_{r,r}(\bR^d,\rho)\subset \mathcal{S}'$-valued solution $s$, where $r\ge 2$ and $\beta>0$. Observe that since $r\ge 2$ and $\beta>0$ every $f\in B^{\beta}_{r,r}(\bR^d,\rho)$ can be identified with a function $\xi(f)\in L^r(\bR^d,\rho)$ in a continuous way via (\ref{mapping}). We again denote by $f$ the function $\xi(f)$. Then $g(\cdot,s)$ means the function
\begin{align*}
g(\cdot,s):\bR^d\to \bR,\, x\mapsto g(x,s(x)),
\end{align*}
which in turn can again be identified with a distribution via
\begin{align*}
\langle g(\cdot,s),\varphi\rangle:=\int\limits_{\bR^d} g(x,s(x))\varphi(x)\,\lambda^d(dx).
\end{align*}
By a $\mathcal{B}_{r,r}^\beta(\bR^d,\rho)\subset \mathcal{S}'$-valued solution of (\ref{eq11}) we mean a measurable mapping $$s:(\Omega,\mathcal{F})\to (B^{\beta}_{r,r}(\bR^d,\rho),\mathcal{B}(B^{\beta}_{r,r}(\bR^d,\rho)))$$ such that
\begin{align*}
\langle s, p(D)^*\varphi \rangle=\int_{\bR^d}g(x,s(x))\varphi(x)\lambda^d(dx)+\langle \dot{L},\varphi\rangle
\end{align*}
for every $\varphi \in \mathcal{S}$.
\begin{proposition}\label{lemma1} Let $r\in[2,\infty]$, $\rho<-\frac{d}{\min\{2,\varepsilon\}}$, $\kappa>d(1-1/r)+\beta$ for some $\beta >0$ and $p(D)$ be a partial differential operator satisfying 
\begin{align}\label{eqregular}
\left|D^{\gamma}\frac{1}{p(i\xi)}\right|\le c_\gamma \langle \xi\rangle^{-\kappa-|\gamma|}
\end{align}
for every $\gamma\in \bN^d_0$, where $c_\gamma \ge 0$. Furthermore, let $g:\bR^d\times \bC\to \bR$ be a Lipschitz function  such that
\begin{align*}
|g(x,y)|\le C(1+|y|)
\end{align*}
for some constant $C>0$ for all $x\in\bR^d$ and $y\in \bC$ and assume that
\begin{align}\label{lip}
\begin{split}
\|g\|_{Lip}:=&\sup_{x\in \bR^d}\sup_{z,y\in \bC} \frac{|g(x,y)-g(x,z)|}{|y-z|}\\
 <&( \| p(D)^{-1}\|_{L^r(\bR^d,\rho)\to B^{\beta}_{r,r}(\bR^d,\rho)}\|id\|_{B^\beta_{r,r}(\bR^d,\rho)\to L^r(\bR^d,\rho)})^{-1}<\infty.
\end{split}
\end{align}
Let $\dot{L}$ be a L\'{e}vy white noise on $\mathcal{S}'$ with characteristic triplet $(a,\gamma,\nu)$ such that $\int_{|r|>1} |r|^\varepsilon \nu(dr)<\infty$. Let $l=\beta-\kappa+d\left(\frac{1}{2}-\frac{1}{r}\right)<-\frac{d}{2}$ and choose a version of $\dot{L}$ in the Sobolev space $B^l_{2,2}(\bR^d,\rho)$ as described above.\\
Then there exists a unique measurable mapping $s:(\Omega,\mathcal{F})\to (B^{\beta}_{r,r}(\bR^d,\rho),\mathcal{B}(B^{\beta}_{r,r}(\bR^d,\rho)))$, which solves the equation (\ref{eq11}).
 Especially, it holds that $s\in L^r(\Omega, B^{\beta}_{r,r}(\bR^d,\rho))$ if $\varepsilon>r\ge 2$.
\end{proposition}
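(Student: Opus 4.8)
The plan is to solve \eqref{eq11} by a Banach fixed point argument in the space $B^{\beta}_{r,r}(\bR^d,\rho)$, carried out pathwise in $\omega\in\Omega$. First I would define the deterministic linear operator $p(D)^{-1}:L^r(\bR^d,\rho)\to B^{\beta}_{r,r}(\bR^d,\rho)$ as the Fourier multiplier $f\mapsto \mathcal{F}^{-1}\bigl(\tfrac{1}{p(i\cdot)}\mathcal{F}f\bigr)$; the assumption \eqref{eqregular} together with [\ref{Edmunds}, Theorem 5.4.2, p. 224] shows it is bounded from $L^r(\bR^d,\rho)=B^0_{r,r}(\bR^d,\rho)$ (here I use $r\ge 2$ and the identification via \eqref{mapping}) to $B^{\kappa}_{r,r}(\bR^d,\rho)$, and then the embedding $B^\kappa_{r,r}(\bR^d,\rho)\hookrightarrow B^{\beta}_{r,r}(\bR^d,\rho)$ from Proposition \ref{embeddings} (which requires $\kappa-\beta\ge 0$, guaranteed by $\kappa>d(1-1/r)+\beta$) makes the composition bounded; denote its norm $\|p(D)^{-1}\|_{L^r(\bR^d,\rho)\to B^{\beta}_{r,r}(\bR^d,\rho)}$ as in \eqref{lip}. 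Likewise $p(D)^{-1}$ maps the chosen version of $\dot L$, which lies in $B^{l}_{2,2}(\bR^d,\rho)$ with $l=\beta-\kappa+d(\tfrac12-\tfrac1r)$, into $B^{l+\kappa}_{2,2}(\bR^d,\rho)$, and by Proposition \ref{embeddings} (the numerology $l+\kappa-\beta = d(\tfrac12-\tfrac1r) = \tfrac{d}{2}-\tfrac{d}{r}$ is exactly the embedding threshold $\tfrac{d}{2}-\tfrac{d}{r}$) into $B^{\beta}_{r,r}(\bR^d,\rho)$; this is precisely the content of Lemma \ref{Besov} with $q=1$, giving an $\omega$-wise element $w:=p(D)^{-1}\dot L\in B^{\beta}_{r,r}(\bR^d,\rho)$ which is moreover a random variable into that space with its Borel $\sigma$-field.

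Next I would set up the fixed point map. For $v\in B^{\beta}_{r,r}(\bR^d,\rho)$, the identification $\mathrm{id}:B^{\beta}_{r,r}(\bR^d,\rho)\to L^r(\bR^d,\rho)$ of \eqref{mapping} (bounded since $\beta>0$, $r\ge 2$) sends $v$ to a genuine function, and the linear growth bound $|g(x,y)|\le C(1+|y|)$ gives $g(\cdot,v)\in L^r(\bR^d,\rho)$ with $\|g(\cdot,v)\|_{L^r(\bR^d,\rho)}\le C(\|\langle\cdot\rangle^\rho\|_{L^r}+\|v\|_{L^r(\bR^d,\rho)})$ — finite because $\rho r<-d$. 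Define
\begin{align*}
\Phi(v):=p(D)^{-1}\,g(\cdot,v)+w.
\end{align*}
Then $\Phi$ maps $B^{\beta}_{r,r}(\bR^d,\rho)$ into itself, and for $v_1,v_2$ the Lipschitz bound gives
\begin{align*}
\|\Phi(v_1)-\Phi(v_2)\|_{B^{\beta}_{r,r}(\bR^d,\rho)}
\le \|p(D)^{-1}\|_{L^r(\bR^d,\rho)\to B^{\beta}_{r,r}(\bR^d,\rho)}\,\|g(\cdot,v_1)-g(\cdot,v_2)\|_{L^r(\bR^d,\rho)}
\le q\,\|v_1-v_2\|_{B^{\beta}_{r,r}(\bR^d,\rho)},
\end{align*}
where $q=\|p(D)^{-1}\|_{L^r(\bR^d,\rho)\to B^{\beta}_{r,r}(\bR^d,\rho)}\,\|\mathrm{id}\|_{B^\beta_{r,r}(\bR^d,\rho)\to L^r(\bR^d,\rho)}\,\|g\|_{Lip}<1$ by assumption \eqref{lip}. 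Banach's fixed point theorem yields a unique fixed point $s(\omega)\in B^{\beta}_{r,r}(\bR^d,\rho)$ for each $\omega$, and unwinding $s=p(D)^{-1}g(\cdot,s)+p(D)^{-1}\dot L$ one applies $p(D)$ (i.e. tests against $p(D)^*\varphi=p(-D)\varphi$) to see that $s$ satisfies the weak formulation $\langle s,p(D)^*\varphi\rangle=\int g(x,s(x))\varphi(x)\,\lambda^d(dx)+\langle\dot L,\varphi\rangle$ for all $\varphi\in\mathcal{S}$; uniqueness of $s$ as an $\mathcal{S}'$-element with values in $B^{\beta}_{r,r}(\bR^d,\rho)$ follows because any such solution is a fixed point of $\Phi$ (using that $p(D)$ is injective on the relevant space, as in Proposition \ref{prop1}).

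The remaining point is measurability of $\omega\mapsto s(\omega)$ from $(\Omega,\mathcal{F})$ to $(B^{\beta}_{r,r}(\bR^d,\rho),\mathcal{B}(B^{\beta}_{r,r}(\bR^d,\rho)))$. I would obtain this from the Picard iteration: set $s_0:=w$ and $s_{n+1}:=\Phi(s_n)$. Since $w$ is a random variable into $B^{\beta}_{r,r}(\bR^d,\rho)$ (Lemma \ref{Besov}), and $v\mapsto p(D)^{-1}g(\cdot,v)$ is a continuous — hence Borel measurable — self-map of the Banach space $B^{\beta}_{r,r}(\bR^d,\rho)$, each $s_n$ is measurable by induction (composition of measurable and continuous maps), and $s_n(\omega)\to s(\omega)$ in norm for every $\omega$ by the contraction estimate; a pointwise limit of measurable maps into a metric space is measurable, so $s$ is measurable. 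Finally, for the integrability claim when $\varepsilon>r\ge 2$: by [\ref{Fageot1}] the chosen version satisfies $\bE\|\dot L\|_{B^l_{2,2}(\bR^d,\rho)}^r<\infty$ (this is where $\rho<-d/\min\{2,\varepsilon\}$ and $\varepsilon>r$ enter, ensuring the relevant moments of the L\'evy white noise on the weighted Sobolev space are finite), hence $\bE\|w\|_{B^{\beta}_{r,r}(\bR^d,\rho)}^r<\infty$; iterating the bound $\|s\|_{B^{\beta}_{r,r}(\bR^d,\rho)}\le q\|s\|_{B^{\beta}_{r,r}(\bR^d,\rho)}+\|p(D)^{-1}\|\,C(\|\langle\cdot\rangle^\rho\|_{L^r}+\|\mathrm{id}\|\,\|s\|_{B^{\beta}_{r,r}(\bR^d,\rho)})+\|w\|_{B^{\beta}_{r,r}(\bR^d,\rho)}$ and solving for $\|s\|$ (possible since $q<1$ and, after absorbing the growth term, the effective contraction constant stays below $1$ — one may need the slightly stronger reading of \eqref{lip} that the growth constant $C$ is likewise controlled, or simply use linear growth to get $\|s\|\le \mathrm{const}\cdot(1+\|w\|)$) gives $s\in L^r(\Omega,B^{\beta}_{r,r}(\bR^d,\rho))$.

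The main obstacle I anticipate is not the fixed point step itself but the bookkeeping around the three Besov/Sobolev indices: one must verify that with $l=\beta-\kappa+d(\tfrac12-\tfrac1r)$ the hypothesis $l<-d/2$ is consistent with $\kappa>d(1-1/r)+\beta$ (indeed $l<-d/2 \iff \beta-\kappa < -d/2 - d(\tfrac12-\tfrac1r) = -d + d/r = -d(1-1/r)$, i.e. $\kappa>\beta+d(1-1/r)$, which is exactly the standing assumption), and that the embedding exponents in Proposition \ref{embeddings} line up so that $p(D)^{-1}\dot L$ actually lands in $B^{\beta}_{r,r}(\bR^d,\rho)$ with the \emph{same} weight $\rho$ rather than a smaller one. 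Both are direct once the arithmetic above is spelled out, so the proof is essentially a careful assembly of Lemma \ref{Besov}, Proposition \ref{embeddings}, the mapping \eqref{mapping}, and the contraction mapping principle.
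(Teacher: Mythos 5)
Your proposal is correct and follows essentially the same route as the paper: solve the linear equation $p(D)u=\dot L$ via Lemma \ref{Besov}, reduce the semilinear problem to a fixed point of $v\mapsto p(D)^{-1}g(\cdot,\,\cdot\,+v)$, use \eqref{lip} to get a strict contraction, and control moments of $\|\dot L\|$ for the $L^r(\Omega)$ claim. The only cosmetic differences are that you iterate Picard approximations to get measurability where the paper instead shows the fixed point depends (Lipschitz-)continuously on the linear part $u$ and composes with the measurable $u$, and that the paper invokes [\ref{Aziznejad}, Proposition 5] for the final moment bound.
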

\begin{remark}\label{rembestwaifu}
Observe that 
\begin{align*}
&p(D)^{-1}:B^{\beta-\kappa}_{r,r}(\bR^d,\rho)\to B^{\beta}_{r,r}(\bR^d,\rho)\textrm{ and }\\
&p(D)^{-1}:L^{r}(\bR^d,\rho)\to B^{\beta}_{r,r}(\bR^d,\rho)
\end{align*}
are well-defined and continuous linear operators by assumption (\ref{eqregular}) and that $B^{l}_{r,r}(\bR^d,\rho)$ is continuously embedded into $B^{j}_{r,r}(\bR^d,\rho)$ for every $l>j$ and $L^r(\bR^d,\rho)$ is continuously embedded into $B^{a}_{r,r}(\bR^d,\rho)$ for every $a<0$, see  [\ref{Edmunds}, Theorem 5.4.2, p. 224].
\end{remark}
\begin{proof}
We set $u$ to be the unique solution of 
\begin{align}\label{middle}
p(D)u=\dot{L}.
\end{align}
From Lemma \ref{Besov}  we see that $u$ has a measurable version from $(\Omega,\mathcal{F})$ to $(B^{\beta}_{r,r}(\bR^d,\rho), \mathcal{B}(B^{\beta}_{r,r}(\bR^d,\rho)))$. We see that in order to solve $(\ref{eq11})$ we need to solve the equation
\begin{align}\label{warum2}
p(D)v=g(\cdot,u+v)
\end{align}
with $v\in B^\beta_{r,r}(\bR^d,\rho)$, where $u$ is defined in (\ref{middle}), as $s:=u+v$ solves (\ref{eq11}). Since $u\in B^{\beta}_{r,r}(\bR^d,\rho)$, we have $g(\cdot,u+v)\in L^r(\bR^d,\rho)$, as
\begin{align*}
\int\limits_{\bR^d} \langle x\rangle^{r\rho}|g(x,u(x)+v(x))|^r\lambda^d(dx)&\le \int\limits_{\bR^d}\langle x\rangle^{r\rho}C(1+|u(x)|+|v(x)|)^r\lambda^d(dx)\\
&\le C'(1+\|u\|_{L^r(\bR^d,\rho)}^r+\|v\|_{L^r(\bR^d,\rho)}^r)
\end{align*}
for some suitable constants $C$ and $C'>0$. Moreover, we see from Remark \ref{rembestwaifu} that $p(D)^{-1} g(\cdot,u+v)\in B^\beta_{r,r}(\bR^d,\rho)$.
Therefore let $\tilde{u} \in B^\beta_{r,r}(\bR^d,\rho)$ and we define $\Psi_{\tilde u}=\Psi: B^{\beta}_{r,r}(\bR^d,\rho)\to B^{\beta}_{r,r}(\bR^d,\rho)$ by 
\begin{align*}
\Psi(\varphi)=p(D)^{-1}g(\cdot,\tilde{u}+\varphi)
\end{align*}
for all $\varphi\in B^{\beta}_{r,r}(\bR^d,\rho)$. We show that there exists a fixed point of $\Psi$, which is especially the solution of (\ref{warum2}) for the fixed $\tilde{u}\in {B}^\beta_{r,r}(\bR^d,\rho)$. We see that
\begin{align*}
\| \Psi(\varphi_1)-\Psi(\varphi_2)\|_{B^{\beta}_{r,r}(\bR^d,\rho)}&\le\| p(D)^{-1}\|_{L^r(\bR^d,\rho)\to B^{\beta}_{r,r}(\bR^d,\rho)}\| g(\cdot,\tilde{u}+\varphi_1)-g(\cdot,\tilde{u}+\varphi_2)\|_{L^r(\bR^d,\rho)}\\
&\le\| p(D)^{-1}\|_{L^r(\bR^d,\rho)\to B^{\beta}_{r,r}(\bR^d,\rho)}\|g\|_{Lip}\|id\|_{B^\beta_{r,r}(\bR^d,\rho)\to L^r(\bR^d,\rho)}\|\varphi_1-\varphi_2\|_{B^{\beta}_{r,r}(\bR^d,\rho)}.
\end{align*}
It follows that $\Psi$ is a strict contraction and by Banach's fixed point theorem we conclude that for every $\tilde{u}\in B^\beta_{r,r}(\bR^d,\rho)$ there exists a unique solution $v\in B^\beta_{r,r}(\bR^d,\rho)$ of 
\begin{align*}
p(D) \tilde{v}=g(\cdot,\tilde{u}+\tilde{v}).
\end{align*}
By a small calculation we see that $\tilde{v}$ depends continuously on $\tilde{u}$. Namely let $u_1$ and $u_2$ be in $B^\beta_{r,r}(\bR^d,\rho)$ and let $v_1$ and $v_2$ be the corresponding fixed points. We see that
\begin{align*}
&\|v_1-v_2\|_{B^\beta_{r,r}(\bR^d,\rho)}\\
\le&\| p(D)^{-1}\|_{L^r(\bR^d,\rho)\to B^{\beta}_{r,r}(\bR^d,\rho)}\| g(\cdot,u_1+v_1)-g(\cdot,u_2+v_2)\|_{L^r(\bR^d,\rho)}\\
\le&\| p(D)^{-1}\|_{L^r(\bR^d,\rho)\to B^{\beta}_{r,r}(\bR^d,\rho)}\|g\|_{Lip}\|id\|_{B^\beta_{r,r}(\bR^d,\rho)\to L^r(\bR^d,\rho)}(\|u_1-u_2\|_{B^{\beta}_{r,r}(\bR^d,\rho)}+\|v_1-v_2\|_{B^{\beta}_{r,r}(\bR^d,\rho)}),
\end{align*}
which implies that
\begin{align*}
\|v_1-v_2\|_{B^\beta_{r,r}(\bR^d,\rho)}\le \frac{\| p(D)^{-1}\|_{L^r(\bR^d,\rho)\to B^{\beta}_{r,r}(\bR^d,\rho)}\|g\|_{Lip}\|id\|_{B^\beta_{r,r}(\bR^d,\rho)\to L^r(\bR^d,\rho)}}{1-{\| p(D)^{-1}\|_{L^r(\bR^d,\rho)\to B^{\beta}_{r,r}(\bR^d,\rho)}\|g\|_{Lip}}\|id\|_{B^\beta_{r,r}(\bR^d,\rho)\to L^r(\bR^d,\rho)}}\|u_1-u_2\|_{B^{\beta}_{r,r}(\bR^d,\rho)}.
\end{align*}
We conclude that there exists a measurable solution $v$ of (\ref{warum2}) in the space \\$(B^{\beta}_{r,r}(\bR^d,\rho),\mathcal{B}(B^{\beta}_{r,r}(\bR^d,\rho)))$. As the solution $s$ of (\ref{eq11}) is then given by $s=u+v$, we conclude that we find a unique measurable solution of (\ref{eq11}).\\ 
Now let $\varepsilon>r\ge 2$. We compute as above that
\begin{align*}
\|s\|_{B^{\beta}_{r,r}(\bR^d,\rho)}=&\|p(D)^{-1}p(D)(u+v)\|_{B^{\beta}_{r,r}(\bR^d,\rho)}\\
\leq &\| p(D)^{-1}\|_{L^r(\bR^d,\rho)\to B^{\beta}_{r,r}(\bR^d,\rho)}\|g(\cdot,s)\|_{L^r(\bR^d,\rho)}+ C\|p(D) u\|_{B^{\beta-\kappa}_{r,r}(\bR^d,\rho)}\\
\leq&\| p(D)^{-1}\|_{L^r(\bR^d,\rho)\to B^{\beta}_{r,r}(\bR^d,\rho)}\|g(\cdot,s)\|_{L^r(\bR^d,\rho)}+C\|\dot{L}\|_{B^{\beta-\kappa}_{r,r}(\bR^d,\rho)}
\end{align*}
for some constant $C$, from which we conclude that $$\|s\|_{B^{\beta}_{r,r}(\bR^d,\rho)}\leq C'( 1+\|\dot{L}\|_{B^{\beta-\kappa}_{r,r}(\bR^d,\rho)})$$ for some constant $C'$ and by [\ref{Aziznejad}, Proposition 5] we infer that $\bE \|s\|_{B^{\beta}_{r,r}(\bR^d,\rho)}^{r}<\infty$.
\end{proof}

\begin{example}
Let $d=1,2$ or $3$, $\lambda>0$ and $\dot{L}$ be a L\'{e}vy white noise in $\mathcal{S}'$ with $\int_{|x|>1} |x|^\varepsilon\nu(dx)<\infty$ for some $\varepsilon>0$. Let $\rho<-\frac{d}{\min{2,\varepsilon}}$ and choose a modification of $\dot{L}$ which is $(W^{\alpha}_2(\bR^d,\rho),\mathcal{B}(W^{\alpha}_2(\bR^d,\rho))$ measurable with $\alpha \in (-2,-3/2)$. Then for every $\beta \in (0,2+\alpha]$ there exists some $\varepsilon_{\beta}>0$ such that
\begin{align*}
(\lambda-\Delta)s+c\sin(s)=\dot{L}
\end{align*}
has a unique and measurable solution in $W^\beta_{2}(\bR^d,\rho)$ for all $c\in (0,\varepsilon_{\beta})$.
\end{example}
\begin{proof}
By [\ref{Grafakos}, Example 6.2.9, p. 449] we know that $(\lambda-\Delta)$ satisfies $(\ref{eqregular})$ for $\kappa=2$ and $u$ is even a real-valued distribution. Moreover, we see that $\sin$ is Lipschitz-continuous with Lipschitz constant equal to $1$ and by Proposition \ref{lemma1} we conclude that there exists a unique pathwise solution.
\end{proof}

\section*{Acknowledgement:} This work was funded by DFG grant LI 1026/6-1. Financial support is gratefully acknowledged.

\vspace{1cm}
David Berger\\
 Ulm University, Institute of Mathematical Finance, Helmholtzstra{\ss}e 18, 89081 Ulm,
Germany\\
email: david.berger@uni-ulm.de


\begin{thebibliography}{LPSt}


\bibitem{Aziznejad} S. Aziznejad, J. Fageot and M. Unser, {\it Wavelet analysis of the Besov regularity of L\'{e}vy white noise}, (2018) arXiv:1801.09245\label{Aziznejad}
\bibitem{Berger} D. Berger, {\it CARMA generalized processes and stochastic partial differential equations}, (2019) arXiv:1904.02928 \label{Berger}
\bibitem{Brockwell3}P.J. Brockwell, V. Ferrazzano, C. Kl\"uppelberg, {\it High-frequency sampling and kernel estimation for continuous-time moving average processes}, J. Time Ser. Anal. (2013), 34, 385-404.\label{Brockwell3}
\bibitem{Lindner} P.J. Brockwell. and A. Lindner, {\it Existence and uniqueness of stationary L\'{e}vy-driven CARMA processes}, Stoch. Proc. Appl. (2009) 119, 2660-2681.\label{Lindner}
\bibitem{Brockwell} P.J. Brockwell and Y. Matsuda, {\it Continuous auto-regressive moving average random fields on  $\bR^n$},J. R. Stat. Soc. B (2016), 79, 833-857.\label{Brockwell}
\bibitem{Dalang1} R. Dalang and T. Humeau, {\it L\'{e}vy processes and L\'{e}vy white noise as tempered distributions}, Ann. Probab. (2017), 45, 4389-4418.\label{Dalang1}
\bibitem{Edmunds} D. Edmunds and H. Triebel, {\it Function spaces, entropy numbers, differential operators}, Cambridge University Press, 1996\label{Edmunds}
\bibitem{Fageot2} J. Fageot, A. Amini, M. Unser, {\it On the Continuity of the Characteristic Functionals and Sparse Stochastic Modeling}, Journal of Fourier Analysis and Applications, vol. 20, no. 6, pp. 1179-1211, 2014 \label{Fageot2}
\bibitem{Fageot} J. Fageot and T. Humeau, {\it Unified view on L\'{e}vy white noise: general integrability conditions and applications to linear SPDE}, (2018) arXiv:1708.02500 \label{Fageot}
\bibitem{Fageot1} J. Fageot, A. Fallah and M. Unser, {\it Multidimensional L\'{e}vy white noises in weighted Besov space}, Stochastic Process. Appl. (2017) 127, 1599-1621 \label{Fageot1} 

\bibitem{Garcia} I. Garcia, C. Kl\"uppelberg and G. M\"uller, {\it Estimation of stable CARMA models with an application to electricitiy spot prices}, Statistical Modelling (2011) 11 , 447-470 \label{Garcia}
\bibitem{Grafakos} L. Grafakos, {\it Classical Fourier Analysis, third edition}, Springer, 2014\label{Grafakos} 
\bibitem{Hormander} L. H\"ormander, {\it On the division of distributions by polynomials}, Ark. Mat. (1958) 3, 555-568\label{Hormander}
\bibitem{Ito}K. Itō, {\it Foundations of Stochastic Differential Equations in Infinite-Dimensional Spaces}, CBMS–NSF Regional Conference Series in Applied Mathematics (1984), vol. 47, Society for Industrial and Applied Mathematics (SIAM), Philadelphia, PA \label{Ito}
\bibitem{Kaneko1} A. Kaneko, {\it Introduction to hyperfunctions}, Kluwer Academic Publishers, 1988 \label{Kaneko1}
\bibitem{Kaneko} A. Kaneko, {\it Liouville type theorem for solutions of linear partial differential equations with constant coefficients}, Annales Polonici Mathematici 74 (2000), 143-159 \label{Kaneko}
\bibitem{Ortner} N. Ortner and P. Wagner, {\it  Fundamental solutions of linear partial differential operators}, Springer, 2015\label{Ortner}
\bibitem{Pham} V.S. Pham, {\it L\'{e}vy driven causcal CARMA random fields}, (2018) arXiv:1805.08807\label{Pham}
\bibitem{Reed} M. Reed and B. Simon, {\it Methods of modern mathematical physics II: Fourier anaylsis, self-adjointness}, Academic Press, New York, 1975\label{Reed}
\bibitem{Triebel1} H. Triebel, {\it Theory of Function Spaces}, Modern Birkh\"auser Classics, 2000 \label{Triebel1}
\bibitem{Triebel3} H. Triebel, {\it Theory of Function Spaces III}, Modern Birkh\"auser Classics, 2000 \label{Triebel3}
\end{thebibliography}
\end{document}